\documentclass[11pt]{article}
\usepackage{amsmath}
\usepackage{amssymb}
\usepackage{amstext}
\usepackage{amsthm}
\usepackage{url}

\newcommand{\SECTION}[1]{\S\ref{#1}}

\newcounter{defcounter}
\setcounter{defcounter}{1}
\newtheorem{theorem}{Theorem}[section]

\newtheorem{lemma}[theorem]{Lemma}

\newtheorem{definition}[theorem]{Definition}

\newtheorem{algorithm}[theorem]{Algorithm}

\numberwithin{equation}{section}

\newcommand{\IGNORE}[1]{}
\newcommand{\DEF}[1]{\stackrel{\sf def}{#1}}

\def\Z{{\mathbb Z}}
\def\F{{\mathbb F}}
\def\G{G_\alpha}

\DeclareMathOperator{\ORD}{ord}

\DeclareMathOperator{\POLY}{poly}

\newcommand{\GCD}[2]{\big(#1,\; #2\big)}
\newcommand{\GCDrth}[1]{\GCD{#1}{x^r-\beta}}

\newcommand{\SET}[1]{\left\{ #1 \right\}}

\newcommand{\SETR}[2]{\left\{ #1 \;:\; #2 \right\}}

\begin{document}
\title{
On Taking $r$-th Roots without $r$-th Nonresidues over Finite Fields
and Its Applications
}
\author{Tsz-Wo Sze ({\tt szetszwo@cs.umd.edu})}
\date{Preliminary version, \today}
\maketitle
\begin{abstract}
We first show a deterministic algorithm for taking $r$-th roots over $\F_q$
without being given any $r$-th nonresidue,
where $\F_q$ is a finite field with $q$ elements
and $r$ is a small prime
such that $r^2$ divides of $q-1$.
As applications,
we illustrate deterministic algorithms over $\F_q$
for constructing $r$-th nonresidues,
constructing primitive elements,
solving polynomial equations
and computing elliptic curve ``$n$-th roots'',
and a deterministic primality test
for the generalized Proth numbers.
All algorithms are proved without assuming any unproven hypothesis.
They are efficient only if all the factors of $q-1$ are small and
some primitive roots of unity can be constructed efficiently over $\F_q$.
In some cases,
they are the fastest among the known deterministic algorithms.
\end{abstract}
\section{Introduction}\label{sect-introduction}
Let $\F_q$ be a finite field with $q$ elements
and $r$ be a prime.
Similar to the relationship between 
\emph{taking square roots} and \emph{constructing quadratic nonresidues}
over $\F_q$,
\emph{taking $r$-th roots} over $\F_q$,
for $r$ a divisor of $q-1$,
is polynomial-time equivalent to
\emph{constructing $r$-th nonresidues} over $\F_q$.
Clearly,
if $r$-th roots can be computed efficiently,
an $r$-th nonresidue can be constructed by
taking $r$-th roots repeatedly on a non-zero, non-identity element.
For the converse,
Tonelli-Shanks square root algorithm \cite{Tonelli1891, Shanks1972}
can be generalized to take $r$-th root,
provided that an $r$-th nonresidue is given as an input.

Without an $r$-th nonresidue as an input,
there is no known unconditionally deterministic polynomial-time
$r$-th root algorithms
over finite fields in general
except for some easy cases such as
\[
\GCD{r}{q-1}=1
\qquad\text{or}\qquad
r\;\parallel\;q-1;
\]
see \cite{Barreto2006}.
Under the assumption of the extended Riemann hypothesis,
Buchmann and Shoup showed a deterministic polynomial-time algorithm
for constructing $k$-th power nonresidues
over finite fields \cite{Buchmann1996}.

For \emph{taking square roots} over $\F_q$,
if a quadratic nonresidue is given,
we may use deterministic polynomial-time square root algorithms such as
Tonelli-Shanks \cite{Tonelli1891, Shanks1972}, 
Adleman-Manders-Miller \cite{Adleman1977}
and Cipolla-Lehmer \cite{Cipolla1903, Lehmer1969}.
Without quadratic nonresidues,
we have Schoof's square root algorithm over prime fields \cite{Schoof1985},
and our square root algorithm over any finite field \cite{stw2011sqrt}.
Note that these two algorithms run in polynomial-time only in some cases.
Obviously,
taking square roots and \emph{solving quadratic equations}
are polynomial-time equivalent.

A general problem is \emph{solving polynomial equations} over $\F_q$,
which is a generalization of the following problems,
\begin{itemize}
\item taking $r$-th roots,
\item constructing primitive $r$-th roots of unity,
\item constructing $r$-th nonresidues,
\item constructing primitive elements (generators of $\F_q^\times$),
\end{itemize}
where $r$ is a prime divisor of $q-1$.
It is clear that 
a primitive $r$-th root of unity can be computed efficiently 
from any $r$-th nonresidue.
By definition,
a primitive element is also an $r$-th nonresidue.

A more general problem is \emph{polynomial factoring} over $\F_q$.
Although there is a deterministic polynomial-time algorithm,
the celebrated Lenstra-Lenstra-Lov\'asz algorithm,
for factoring polynomials over rational numbers
\cite{Lenstra1982},
there are no known unconditionally finite field counterparts in general.
For deterministic polynomial factoring over finite fields,
we have Berlekamp's algorithm,
which is efficient only for $q$ small \cite{Berlekamp1967}.
For $q$ large,
there are probabilistic algorithms such as
the probabilistic version of Berlekamp's algorithm~\cite{Berlekamp1970},
Cantor and Zassenhaus~\cite{Cantor1981},
von zur Gathen and Shoup~\cite{Gathen1992},
and Kaltofen and Shoup~\cite{Kaltofen1998}.
Under some generalizations of Riemann hypothesis,
there is a subexponential-time algorithm by Evdokimov
for any finite field~\cite{Evdokimov1994},
and there are deterministic polynomial-time algorithms for some special cases.
For a survey,
see \cite{Gathen2001}.

The problem of solving polynomial equations is to find solutions of 
\[f(x)=0\]
over $\F_q$,
where $f(x)\in \F_q[x]$ is a polynomial.
Without loss of generality,
we may assume $f$ is a product of distinct linear factors
because \emph{squarefree factorization}
and \emph{distinct degree factorization} can be computed efficiently;
see \cite{Knuth1997, Gathen2001, Yun1976}.
If $f$ has a multiple root,
then 
\begin{eqnarray}\label{eqn-squarefree factorization}
\GCD{f}{f'}
\end{eqnarray}
 is a non-trivial factor of $f$,
where $f'$ denotes the derivative of $f$.
Since $x^q-x$ is the product of all monic linear polynomials in $\F_q[x]$,
the non-linear factors can be removed by computing
\begin{eqnarray}\label{eqn-distinct degree factorization}
\GCD{f(x)}{x^q-x}.
\end{eqnarray}

Let $E(\F_q)$ be an elliptic curve defined over $\F_q$.
An analogy of taking $r$-th roots over $\F_q$
is \emph{taking ``$n$-th root'' over $E(\F_q)$}.
Consider the following:
given a point $Q\in E(\F_q)$ and a positive integer $n$,
\renewcommand{\labelenumi}{(\roman{enumi})}\begin{enumerate}
\item[(E1)]
decide whether
\begin{eqnarray}\label{eqn-Q=nP}
Q &=& n P
\end{eqnarray}
for some $\infty\neq P\in E(\F_q)$;
\item[(E2)]
find $P$ if such $P$ exists.
\end{enumerate}\renewcommand{\labelenumi}{\arabic{enumi}.}
Note that,
when $Q=\infty$,
the trivial solution $P=\infty$ is excluded.
Although usually the elliptic curve group operation is written additively,
the nature of the problems above is closer to 
finite field $n$-th root
than finite field multiplicative inverse.

In this paper,
the main results are presented in \SECTION{sect-main results}.
We extend the ideas in \cite{stw2011sqrt}
to design a deterministic $r$-th root algorithm in \SECTION{sect-r th root}.
Then,
we demonstrate applications on primality testing,
solving polynomial equations
and taking elliptic curve ``$n$-th roots''
in
\SECTION{sect-primality testing},
\SECTION{sect-solving polynomial equations}
and \SECTION{sect-elliptic curve n th root},
respectively.

\section{Main Results}\label{sect-main results}
The main results are summarized by the theorems at the end of the section.
All theorems can be proved without assuming any unproven hypothesis.

All running times are given in term of bit operations.
We ignore logarithmic factors in running time
and adopt the $\tilde{O}(\;\cdot\;)$ notation.
Polynomial multiplication,
division with remainder,
greatest common divisor
over $\F_q$ can be computed 
using \emph{fast Fourier transforms} and other fast methods
in
\[
\tilde{O}(d\log q)
\]
bit operations for degree $d$ polynomials.
See \cite{Knuth1997} and \cite{Gathen2003}.

Let
\begin{eqnarray}\label{eqn-q=r_1^e_1...r_m^e_m t+1}
q &=& r_1^{e_1}\cdots r_m^{e_m}t+1
\end{eqnarray}
where
$r_1,\ldots,r_m$ are distinct primes
and $e_1,\ldots,e_m,t\geq 1$
such that $(r_1\cdots r_m,t)=1$.
Define sets of prime powers as follow.

\begin{definition}\label{def-Q}
Let ${\cal Q}_t$ be a set of prime powers.
For all $q\in{\cal Q}_t$,
$q$ can be written as the form in equation~(\ref{eqn-q=r_1^e_1...r_m^e_m t+1})
such that
\begin{eqnarray*}
r_1+\cdots+r_m+t &=& O(\POLY(\log q))
\end{eqnarray*}
and,
for $1\leq j\leq m$,
a primitive $z_j$-th root of unity $\zeta_{z_j}\in\F_q$
can be computed in polynomial-time,
where
\begin{eqnarray}\label{eqn-z_j}
z_j &\DEF{=}& \begin{cases}
4, & \text{if }r_j=2; \\
r_j, & \text{otherwise.}
\end{cases}
\end{eqnarray}
\end{definition}
Informally,
for $q\in {\cal Q}_t$,
$t$ and all the prime factors of $q-1$ are small
and a primitive $z_j$-th root of unity over $\F_q$ can be computed efficiently
for any prime factor $r_j$ of $(q-1)/t$.
Note that the factorization of $q-1$ can be computed efficiently in this case.
Denote the union of ${\cal Q}_t$ for $t\geq 1$ by
\begin{eqnarray}\label{eqn-overline F}
\overline{\cal Q} &\DEF{=}& \bigcup_{t\geq 1}{\cal Q}_t.
\end{eqnarray}
The main results are summarized below.

\begin{theorem}\label{thm-r th root}
Let $q\in\overline{\cal Q}$.
For $r\in\SET{r_1,\ldots,r_m}$,
there is a deterministic polynomial-time algorithm 
computing an $r$-th root of any $r$-th residue over $\F_q$.
Equivalently,
there is a deterministic polynomial-time algorithm 
constructing an $r$-th nonresidue over $\F_q$.
\end{theorem}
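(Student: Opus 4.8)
The plan is to split the statement into two parts: the equivalence of the two algorithmic tasks, which I expect to be routine, and an algorithm for (at least) one of them, which is the content.

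\emph{The equivalence.} Write $q-1=r^{e}s$ with $(r,s)=1$; since every prime factor of $q-1$ is small, $e$ and $s$ are known. Given an $r$-th nonresidue $\nu\in\F_q^\times$, the power $\nu^{s}$ generates the $r$-Sylow subgroup of $\F_q^\times$, and I would extract $r$-th roots by the standard generalization of Tonelli--Shanks \cite{Tonelli1891,Shanks1972}, clearing the $r$-part of the error one $r$-adic digit at a time, as recalled in \SECTION{sect-introduction}. For the converse, suppose $r$-th roots of $r$-th residues can be extracted. Start from $\omega\leftarrow\zeta_{z_j}$ (order $r$ if $r$ is odd, order $4$ if $r=2$) and iterate: while $\omega$ is an $r$-th residue, i.e. $\omega^{(q-1)/r}=1$ (one exponentiation to test), replace $\omega$ by some $r$-th root of it. Each replacement multiplies $\ORD(\omega)$ by $r$, so after at most $v_r(q-1)=O(\log q)$ steps $\omega$ is an $r$-th nonresidue, and the loop condition ensures the extraction subroutine is invoked only on genuine $r$-th residues, so it never fails. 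This disposes of the ``equivalently'' clause and reduces the theorem to constructing one $r$-th nonresidue of $\F_q$, equivalently an element of order $r^{v_r(q-1)}$.

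\emph{The construction.} If $r\parallel q-1$ then $\zeta_{z_j}$ already has order $r$ and is a nonresidue, so assume $r^{2}\mid q-1$. The obstruction is the familiar one: using $\zeta_r:=\zeta_{z_j}$ (for odd $r$) I can write down the factorization
\[
\Phi_{r^{2}}(x)\;=\;\prod_{c=1}^{r-1}\bigl(x^{r}-\zeta_r^{\,c}\bigr)
\]
of the $r^{2}$-nd cyclotomic polynomial over $\F_q$, but a root of any single factor $x^{r}-\zeta_r^{\,c}$ is exactly an $r$-th root I do not yet know how to take, and no naive deterministic search separates its roots. I would attack this by generalizing the square-root algorithm of \cite{stw2011sqrt} from $r=2$ to an arbitrary small prime $r$.

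\emph{The main obstacle.} The engine I expect to use is descent along subfields via the norm. For a subfield $\F_{q_0}\subseteq\F_q$ with $r\mid q_0-1$, the norm $N_{\F_q/\F_{q_0}}$ is surjective and carries $(\F_q^\times)^{r}$ onto $(\F_{q_0}^\times)^{r}$, so it induces an isomorphism $\F_q^\times/(\F_q^\times)^{r}\cong\F_{q_0}^\times/(\F_{q_0}^\times)^{r}$ between groups of order $r$; hence $x$ is an $r$-th nonresidue of $\F_q$ exactly when $N_{\F_q/\F_{q_0}}(x)$ is an $r$-th nonresidue of $\F_{q_0}$. Writing $q=p^{n}$ and letting $f$ be the order of $p$ modulo $r$, whenever $r\mid n$ I can pass to $q_0=p^{n/r}$ and, by lifting the exponent (with the usual adjustment when $r=2$), lower $v_r$ of the group order by one; iterating, I am driven either to a field with $r$ exactly dividing its order, already handled, or to the core field $\F_{p^{f}}$ in which $r^{2}\mid p^{f}-1$ persists. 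Two issues then dominate. First, descending by the norm trades ``extract one $r$-th root in $\F_q$'' for ``invert the norm'', i.e. solve $x^{m}=c$ over $\F_q$ with $m=(q-1)/(q_0-1)$; since $r$ and the other small primes divide $m$, this is a package of $\ell$-th-root problems for small primes $\ell$, so the field descent has to be interleaved with an induction on the exponent $r^{v_r(\cdot)}$ and on the remaining prime factors of $q-1$, and I must check that the whole recursion terminates and stays polynomial-time --- which is exactly where the hypotheses built into $q\in\overline{\cal Q}$ (all prime factors of $q-1$ small, each $\zeta_{z_j}$ computable in polynomial time) are consumed. Second, the core field $\F_{p^{f}}$ with $r^{2}\mid p^{f}-1$ must still be cracked, and that is where the genuinely new ideas of \cite{stw2011sqrt} come in; I expect this step, carried out in \SECTION{sect-r th root}, to be the technical heart of the proof.
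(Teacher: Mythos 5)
Your treatment of the equivalence is fine and matches the paper: given a nonresidue, generalized Tonelli--Shanks extracts roots; given a root extractor, iterate $\omega\mapsto\sqrt[r]{\omega}$ starting from $\zeta_r$ until $\omega^{(q-1)/r}\neq 1$. The paper's own proof of this theorem does the same for the ``equivalently'' clause.

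The gap is that you never actually prove the hard half. You yourself flag it: after the norm-descent preamble you write that the core field with $r^2\mid p^f-1$ ``must still be cracked'' and that this is ``the technical heart of the proof,'' deferring to \SECTION{sect-r th root}. But that section \emph{is} the paper's proof; the theorem asserts that the algorithm built there works, and the paper proves it by chaining Lemmas~\ref{lem-non-trivial factor <=> r th root}, \ref{lem-find a}, \ref{lem-find ell}, \ref{lem-find k_0}, \ref{lem-split x^r-beta V.1}--\ref{lem-split x^r-beta V.3}, and the running-time Lemmas~\ref{lem-running time rth root of beta} and \ref{lem-running time factor(x^r-beta)}. None of the content of those lemmas --- the rational map $\psi_a(x)=(a-x)/(a-\rho x)$, the polynomials $g_{a,k}$, working in $\F_q[x]/(x^r-\beta)$ to hunt for zero divisors, the order-separation Lemma~\ref{lem-n_i != n_j}, the $x\mapsto\zeta_{r^2}$ substitution trick when $\ell=r$ and $\beta^r=1$ --- appears in your sketch, so the construction is missing rather than proved.

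Moreover, the norm-descent scaffolding you interpose is not what the paper does and, as stated, does not close the loop. Passing to $\F_{q_0}=\F_{p^{n/r}}$ indeed drops $v_r(q_0-1)$ by one via the isomorphism $\F_q^\times/(\F_q^\times)^r\cong\F_{q_0}^\times/(\F_{q_0}^\times)^r$, but to pull a nonresidue back up you must invert the norm, i.e.\ solve $x^m=c$ in $\F_q$ with $m=(q-1)/(q_0-1)$ and $r\mid m$ --- which is an $r$-th-root extraction in $\F_q$, the problem you set out to solve. You note this (``the field descent has to be interleaved with an induction\ldots and I must check that the whole recursion terminates'') but do not resolve it, and the paper avoids it entirely: Algorithm~\ref{alg-factor(x^r-beta)} works directly over $\F_q$ with no subfield descent, using only the primitive roots $\zeta_{z_j}$ and the partial factorization of $q-1$ guaranteed by $q\in\overline{\cal Q}$. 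So the detour adds an unproven recursion and still leaves the core case open.
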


\begin{theorem}\label{thm-primitive element}
Let $q\in{\cal Q}_1$.
There is a deterministic polynomial-time algorithm 
constructing a primitive element over $\F_q$.
\end{theorem}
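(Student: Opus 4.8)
The plan is to reduce the construction of a primitive element to the construction of $r_j$-th nonresidues, which is already delivered by Theorem~\ref{thm-r th root}. Recall that $g\in\F_q^\times$ is a primitive element if and only if $\ORD(g)=q-1$, and since $\F_q^\times$ is cyclic this is equivalent to $g$ being an $r_j$-th nonresidue for every $j=1,\ldots,m$; so it suffices to build a single element that is simultaneously an $r_j$-th nonresidue for all $j$. Because $q\in{\cal Q}_1$ we have $q-1=r_1^{e_1}\cdots r_m^{e_m}$ with the $r_j$ all of size $O(\POLY(\log q))$, so this factorization is computable in polynomial time and $m=O(\POLY(\log q))$ (each $r_j\geq 2$ and $r_1+\cdots+r_m=O(\POLY(\log q))$). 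Also $q\in{\cal Q}_1\subseteq\overline{\cal Q}$, so Theorem~\ref{thm-r th root} applies.

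First I would run the algorithm of Theorem~\ref{thm-r th root} once for each $j$ to obtain an $r_j$-th nonresidue $a_j\in\F_q^\times$. Put $m_j\DEF{=}(q-1)/r_j^{e_j}$, which is coprime to $r_j$. In the cyclic group $\F_q^\times$ of order $q-1$, an element is an $r_j$-th residue exactly when its order divides $(q-1)/r_j=r_j^{e_j-1}m_j$; since $a_j$ is a nonresidue, the $r_j$-part of $\ORD(a_j)$ must be the full $r_j^{e_j}$. Hence, setting $b_j\DEF{=}a_j^{m_j}$ removes the prime-to-$r_j$ part of the order and gives $\ORD(b_j)=r_j^{e_j}$, i.e.\ $b_j$ generates the Sylow $r_j$-subgroup of $\F_q^\times$. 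Finally I would output $g\DEF{=}b_1b_2\cdots b_m$: because $r_1^{e_1},\ldots,r_m^{e_m}$ are pairwise coprime, the order of this product in the abelian group $\F_q^\times$ equals $\prod_j\ORD(b_j)=q-1$, so $g$ is primitive. The cost is $m$ invocations of the algorithm of Theorem~\ref{thm-r th root}, together with $m$ exponentiations $a_j\mapsto a_j^{m_j}$ in $\F_q$ and a product of $m$ field elements, all in polynomial time.

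Essentially all of the difficulty is inherited from Theorem~\ref{thm-r th root}; the combination above is elementary finite-group arithmetic, so I do not anticipate a real obstacle. The only points needing care are (a) confirming that $q\in{\cal Q}_1$ genuinely supplies both the factorization of $q-1$ and the bound $m=O(\POLY(\log q))$, and (b) the order bookkeeping for $b_j$ when $r_j=2$, where Theorem~\ref{thm-r th root} still produces a quadratic nonresidue (the special value $z_j=4$ in Definition~\ref{def-Q} being needed only inside that theorem, not here).
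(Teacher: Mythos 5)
Your proposal is correct and follows essentially the same route as the paper: both reduce to Theorem~\ref{thm-r th root}, produce for each $j$ an element of order exactly $r_j^{e_j}$, and take the product of these elements of pairwise coprime orders to get order $q-1$. The only cosmetic difference is that you normalize an arbitrary $r_j$-th nonresidue $a_j$ by exponentiating away the prime-to-$r_j$ part of its order, while the paper uses the primitive $r_j^{e_j}$-th root of unity $\zeta_{r_j^{e_j}}$ that is already built inside the proof of Theorem~\ref{thm-r th root}, so your extra step is a harmless (and arguably more robust) bookkeeping device rather than a new idea.
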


\begin{definition}\label{def-generalized proth}
A generalized Proth number is a positive integer of the form
\begin{eqnarray}\label{eqn-generalized Proth number}
N &=& r^e t+1
\end{eqnarray}
for prime $r$,
positive integers $e$ and $t$
such that $r^e>t$.
\end{definition}

\begin{theorem}\label{thm-r th root primality testing}
Let $N$ be a generalized Proth number.
There is a deterministic algorithm,
which runs in
\[
\tilde{O}((r(r+t)+\log N)r\log^2 N)
\]
bit operations
for deciding the primality of $N$.
Further,
if $r$ is a small constant and $t=O(\log N)$,
the running time is
\[
\tilde{O}(\log^3 N)
\]
bit operations.
\end{theorem}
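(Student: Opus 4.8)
The plan is to reduce the statement to the generalized Proth--Pocklington criterion and then invoke the $r$-th nonresidue construction of Theorem~\ref{thm-r th root}, run over the ring $R=\Z/N\Z$ treated as a ``pseudo-field''. First I would dispose of trivial cases ($N$ even, a perfect power, or below a small bound), and, by moving any factors of $r$ out of $t$ into $r^e$---which enlarges $r^e$ and shrinks $t$, hence preserves $r^e>t$---assume $\GCD{r}{t}=1$. Then I would record the size estimate that powers the Proth form: from $N-1=r^e t$ and $t\le r^e-1$ we get $N\le r^{2e}-r^e+1<(r^e+1)^2$, so $N$ can have at most one prime factor that is $\equiv 1\pmod{r^e}$ (two such would force $N\ge(r^e+1)^2$). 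Combined with the Pocklington argument---if $a^{N-1}\equiv 1\pmod N$ and $\GCD{a^{(N-1)/r}-1}{N}=1$, then $r^e\mid\ORD_p(a)\mid p-1$ for every prime $p\mid N$---this gives the criterion
\[
N\ \text{is prime}
\iff
\exists\,a\in(\Z/N\Z)^\times:\
a^{N-1}\equiv 1\ (\mathrm{mod}\ N)\ \text{ and }\ \GCD{a^{(N-1)/r}-1}{N}=1 ,
\]
the reverse implication being immediate since for prime $N$ any $r$-th nonresidue $a$ works (then $a^{(N-1)/r}$ is a primitive $r$-th root of unity).

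The algorithm is then: run the $r$-th nonresidue procedure underlying Theorem~\ref{thm-r th root} over $R$, intercepting every attempted inversion or gcd of a non-unit and outputting \emph{composite} together with the nontrivial divisor of $N$ so obtained; also output \emph{composite} if the procedure exceeds the step count it is proved to use when the modulus is prime. Otherwise the procedure returns some $a\in R$; compute $a^{N-1}$ and $\GCD{a^{(N-1)/r}-1}{N}$, output \emph{prime} if both conditions of the criterion hold, and \emph{composite} otherwise (in the latter case a nontrivial value of the gcd is again a factor). Correctness is forced by the criterion: if $N$ is prime then $R=\F_N$, and Theorem~\ref{thm-r th root} guarantees the procedure halts in time and returns a genuine $r$-th nonresidue, so both conditions hold and we report \emph{prime}; if $N$ is composite, no $a$ whatsoever satisfies both conditions, so the final test cannot be passed and we report \emph{composite}. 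What needs to be argued rather than cited is that the construction runs at all over the possibly-non-field $R$: the only obstruction is inverting a zero divisor, which we have arranged to convert into a proper factorization, and the step bound makes the whole procedure terminate unconditionally.

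For the running time I would push through the operation count of the $r$-th root algorithm of \SECTION{sect-r th root}. Its basic step is an exponentiation in a ring of degree $O(r)$ over $\Z/N\Z$, costing $\tilde{O}(r\log^2 N)$ bit operations (about $\log N$ multiplications of cost $\tilde{O}(r\log N)$ each); the procedure performs $O(r(r+t))$ such steps, and to these one adds an $O(e)=O(\log N)$-step epilogue---the final power and gcd, plus building the required primitive $r^j$-th roots of unity as $g^{(N-1)/r^j}$ for a small base $g$ and the least $j$ making this $\not\equiv 1$, with the bootstrapping of \SECTION{sect-r th root} taking over when every small $g$ is an $r$-th power modulo $N$. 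Multiplying out gives $\tilde{O}\big((r(r+t)+\log N)\,r\log^2 N\big)$, and specializing to $r=O(1)$, $t=O(\log N)$ collapses this to $\tilde{O}(\log^3 N)$. The main obstacle I anticipate is not this bookkeeping but the preceding step: certifying that the primitive $z$-th root of unity the $r$-th root algorithm demands (with $z=4$ when $r=2$ and $z=r$ otherwise) can be produced within the same budget while $N$'s primality is still only conjectural, and that any failure along the way surfaces as a proof of compositeness rather than a spurious primality certificate.
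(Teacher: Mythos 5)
Your proposal follows the same high-level architecture as the paper's proof: run the deterministic $r$-th nonresidue construction of \SECTION{sect-r th root} over $\Z/N\Z$ treated as a pseudo-field, and certify the outcome via a generalized Proth/Pocklington criterion. The paper cites Theorem~\ref{thm-rth root, q=r^e t+1} for the construction cost and Theorem~\ref{thm-generalized proth} for the certificate, and your bookkeeping reaches the same $\tilde{O}((r(r+t)+\log N)r\log^2 N)$ bound by a slightly different (but equally valid) tally of the same work. The one substantive variation is in the certificate. You use the Pocklington form $\GCD{a^{(N-1)/r}-1}{N}=1$, which admits a one-paragraph proof: every prime $p\mid N$ then satisfies $r^e\mid\ORD_p a\mid p-1$, and $r^e>\sqrt N$ rules out more than one prime factor. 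The paper's Theorem~\ref{thm-generalized proth} instead establishes the strictly stronger statement that the weaker hypothesis $a^{(N-1)/r}\not\equiv1\pmod N$ already suffices, at the price of a longer proof (Lemma~\ref{lem-r^e|phi(l^k) => k=1} together with a Chinese Remainder Theorem argument across the prime-power factors of $N$). Either criterion serves the algorithm; your extra gcd costs only $\tilde{O}(\log^2 N)$ and does not change the bound. You also make explicit two safeguards the paper leaves implicit but which are genuinely needed for a deterministic decision procedure: intercepting any zero-divisor inversion (or nontrivial gcd with $N$) as a compositeness witness, and imposing a step-count cutoff, since the termination lemmas of \SECTION{sect-r th root} are proved only over a field. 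One small citation slip: the routine to invoke is that of Theorem~\ref{thm-rth root, q=r^e t+1}, which is tailored to $q=r^et+1$ and, via the cited Algorithm~5.9, supplies the required $\zeta_r$ within budget; Theorem~\ref{thm-r th root} addresses the more general $q\in\overline{\cal Q}$ setting and is not the source of the explicit running time.
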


\begin{theorem}\label{thm-solving polynomial equation}
Let $q\in{\cal Q}_1$.
There is a deterministic 
\[
\tilde{O}(\POLY(d\log q))
\]
algorithm to solve polynomial equation $f(x)=0$ over $\F_q$
for any degree $d$ polynomial $f(x)\in\F_q[x]$.
\end{theorem}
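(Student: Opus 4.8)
The plan is to reduce, by standard preprocessing, to the case of a squarefree polynomial that splits into distinct linear factors over $\F_q$, and then to separate those factors by extracting, one $r$-ary digit at a time, the discrete logarithms of the roots with respect to a primitive element supplied by Theorem~\ref{thm-primitive element}. Throughout, recall that $q\in{\cal Q}_1$ means $q-1=r_1^{e_1}\cdots r_m^{e_m}$ with $r_1,\dots,r_m$ the prime divisors of $q-1$, that $r_1+\cdots+r_m=O(\POLY(\log q))$, and that this factorization is computable; in particular $m=O(\POLY(\log q))$ and $\sum_j e_j=O(\log q)$.

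First I would normalize $f$. Replacing $f$ by $f/\GCD{f}{f'}$ strips repeated factors, using~(\ref{eqn-squarefree factorization}), and replacing the result by $\GCD{f(x)}{x^q-x}$, using~(\ref{eqn-distinct degree factorization}), discards every irreducible factor of degree greater than $1$; the standard subtlety that $f'$ may vanish identically is resolved using that $\F_q$ is perfect, so that $f$ is a perfect power of a polynomial of smaller degree whose coefficients are obtained by a Frobenius inversion. After this, $f$ is, up to a scalar, a product of distinct monic linear factors; if $x\mid f$ I record the root $0$ and divide it out, so that all remaining roots lie in $\F_q^{\times}$. The cost so far is $\tilde O(d\log q)$, dominated by computing $x^q\bmod f$.

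Next, use Theorem~\ref{thm-primitive element} to construct a primitive element $g\in\F_q^{\times}$, and for a prime $r=r_j$ and an exponent $1\le\ell\le e_j$ put $\zeta_{r^\ell}=g^{(q-1)/r^\ell}$, a primitive $r^\ell$-th root of unity; note $\zeta_{r^\ell}^{\,r^{\ell-1}}=g^{(q-1)/r}=:\zeta_r$ is independent of $\ell$. Fix a prime $r=r_j$. Given a factor $f^{*}$ of $f$ (initially $f^{*}=f$, $\ell=1$, $c=0$) all of whose roots $\alpha$ share a known residue $c$ of $\mathrm{dlog}_g(\alpha)$ modulo $r^{\ell-1}$, I would split $f^{*}$ according to the next $r$-ary digit of $\mathrm{dlog}_g(\alpha)$ as follows. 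For such a root $\alpha$, the element $g^{-c}\alpha$ has discrete logarithm divisible by $r^{\ell-1}$, so $(g^{-c}\alpha)^{(q-1)/r^{\ell}}=\zeta_r^{\,d}$ for the sought digit $d\in\SET{0,\dots,r-1}$. Hence, writing $s=g^{-c(q-1)/r^{\ell}}\in\F_q$ and $w(x)=x^{(q-1)/r^{\ell}}\bmod f^{*}(x)$ (fast modular exponentiation), the polynomials $\GCD{f^{*}(x)}{s\,w(x)-\zeta_r^{\,i}}$ for $i=0,\dots,r-1$ are the pieces of $f^{*}$ corresponding to the $r$ possible values of that digit, and on the piece with digit $i$ the residue modulo $r^{\ell}$ is now known to be $c+r^{\ell-1}i$. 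Iterating this over $\ell=1,\dots,e_j$ and then over $j=1,\dots,m$ separates the roots of $f$ by their discrete logarithms modulo $r_1^{e_1},\dots,r_m^{e_m}$, hence, by the Chinese remainder theorem, modulo $q-1$; since $\mathrm{dlog}_g$ is injective on $\F_q^{\times}$, at this point $f$ has been completely split and its roots can be read off directly. For the running time, there are $\sum_j e_j=O(\log q)$ descent levels, at each level the pieces processed are pairwise coprime divisors of $f$ of total degree at most $d$, and a piece undergoes one exponentiation and at most $r_j$ gcds, so a level costs $\POLY(d\log q)$ bit operations; summing, and adding the $\POLY(\log q)$-time invocation of Theorem~\ref{thm-primitive element} and the preprocessing, the total is $\tilde O(\POLY(d\log q))$.

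The main obstacle is essentially absent from Theorem~\ref{thm-solving polynomial equation} itself: once Theorems~\ref{thm-r th root} and~\ref{thm-primitive element} have done the substantive work of producing a primitive element of $\F_q^{\times}$ unconditionally, what remains is the classical discrete-logarithm root-finding method, and its only delicate point is that the separation according to $\mathrm{dlog}_g$ modulo $r_j^{e_j}$ must be carried out one $r_j$-ary digit at a time — testing all $r_j^{e_j}$ residue classes at once would cost time proportional to $r_j^{e_j}$, which can be as large as $q$. (Alternatively, the $r$-th root algorithm of Theorem~\ref{thm-r th root} may be used in place of the direct exponentiation when passing from one digit to the next; the structure of the argument is unchanged.) The remaining points to check are routine: correctness of the preprocessing when $f'$ vanishes, and the bookkeeping relating the $r$-ary digits of $\mathrm{dlog}_g$ to the roots of unity $\zeta_{r^\ell}$ and to the shifts $g^{-c}$.
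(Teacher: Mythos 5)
Your proposal is correct, and it implements the same Pohlig--Hellman-style digit descent that underlies the paper's Algorithm~\ref{alg-factoring products of linear polynomials}, but with different bookkeeping. You construct a primitive element $g$ once via Theorem~\ref{thm-primitive element} and then track the residues of $\mathrm{dlog}_g(\alpha)$ for the roots $\alpha$ explicitly, obtaining each new $r$-ary digit from a single modular exponentiation $x^{(q-1)/r^{\ell}}\bmod f^{*}$ followed by $r$ gcds. The paper instead maintains the invariant $f\mid x^{d}-a$ with $\ORD(a)=(q-1)/d$, and at each descent step invokes the $r$-th root algorithm of Theorem~\ref{thm-r th root} to produce $b$ with $b^{r_j}=a$, then computes $\GCD{f}{x^{d/r_j}-\zeta_{r_j}^{i}b}$; it never constructs a primitive element and cites only Theorem~\ref{thm-r th root}, whereas you go through Theorem~\ref{thm-primitive element} (itself derived from Theorem~\ref{thm-r th root}). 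The two are equivalent --- your carried residue $c$ is $\mathrm{dlog}_g(a)$, your shift $s\,w(x)-\zeta_r^{i}$ corresponds to the paper's $x^{d/r_j}-\zeta_{r_j}^{i}b$, and your pieces at level $\ell$ correspond to the paper's recursive applications of Algorithm~\ref{alg-factoring products of linear polynomials} to the factors it has produced --- but your variant front-loads all use of the $r$-th root machinery into one call to Theorem~\ref{thm-primitive element} and afterwards runs purely on exponentiations, while the paper's version calls the root algorithm $O(\log q)$ times during the descent yet avoids the $\zeta_{r_i^{e_i}}$ constructions entirely. Either way the cost is $\tilde{O}(\POLY(d\log q))$ for $q\in{\cal Q}_1$, and your accounting of the per-level cost (pairwise coprime pieces of total degree at most $d$, one exponentiation plus at most $r_j$ gcds each) is sound.
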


\begin{theorem}\label{thm-elliptic curve n th root}
Let $q\in{\cal Q}_1$.
There is a deterministic polynomial-time algorithm 
computing elliptic curve ``$n$-th roots'' over $\F_q$
for any positive integer $n=O(\POLY(\log q))$.
\end{theorem}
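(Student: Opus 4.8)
The plan is to reduce the elliptic-curve "$n$-th root" problem to a sequence of prime-power divisions, and each prime-power division to a scalar-division by a single prime, which in turn is handled by combining a point-counting / group-structure computation with the polynomial-equation solver of Theorem \ref{thm-solving polynomial equation}. First I would factor $n=\ell_1^{a_1}\cdots\ell_k^{a_k}$; since $n=O(\POLY(\log q))$ this is cheap. Solving $Q=nP$ is then done by successively solving $Q_{i}=\ell_i P_i$ for each prime factor $\ell_i$ with multiplicity, so it suffices to give a deterministic polynomial-time procedure that, given $R\in E(\F_q)$ and a prime $\ell$, either finds $P$ with $R=\ell P$ or proves none exists (excluding $P=\infty$ when $R=\infty$).

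For the single-prime step I would first compute $\#E(\F_q)=M$ deterministically by Schoof's algorithm, and determine the group structure $E(\F_q)\cong \Z/d_1\Z\times \Z/d_2\Z$ with $d_1\mid d_2$ (this is classical: $d_1\mid q-1$, and a basis can be found deterministically once $M$ is known, using division polynomials and the Weil pairing). If $\ell\nmid M$ then multiplication by $\ell$ is a bijection and $P=\ell^{-1}R$ (inverse taken mod $M$) is the unique answer. If $\ell\mid M$, write $R$ in terms of the computed basis; $R\in \ell E(\F_q)$ iff the obvious congruence conditions on its coordinates hold, and when they do, $P$ is recovered by an inverse mod the appropriate $d_i$ together with, possibly, adding an $\ell$-torsion point. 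The only place where field arithmetic beyond the multiplicative group is needed is in handling $\ell$-torsion: the $\ell$-torsion subgroup $E[\ell](\F_q)$ is the set of roots of (a factor of) the $\ell$-th division polynomial $\psi_\ell(x)\in\F_q[x]$ together with the corresponding $y$-coordinates, and finding those roots is exactly an instance of solving $f(x)=0$ over $\F_q$, which Theorem \ref{thm-solving polynomial equation} does deterministically in $\tilde O(\POLY(d\log q))$ time. Since $\ell\mid M\mid (q-1)(\text{something})$ and the relevant primes are the $r_j$ attached to $q\in{\cal Q}_1$, this is available; and $\deg\psi_\ell=O(\ell^2)=O(\POLY(\log q))$, so the whole step is polynomial-time.

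Assembling: for each of the $O(\POLY(\log q))$ prime-power steps we do one Schoof computation (polynomial in $\log q$), one group-structure computation, one division-polynomial root-finding via Theorem \ref{thm-solving polynomial equation}, and $O(\log n)$ scalar multiplications on $E$; the total is $\tilde O(\POLY(\log q))$, and correctness follows because at each stage we have exactly characterised solvability via the explicit group decomposition. The one subtlety to be careful about is the excluded trivial solution when $Q=\infty$: here we must additionally certify that $E(\F_q)$ contains a nonzero point $P$ with $nP=\infty$, i.e. that $\gcd(n,M)>1$, which is immediate from $M$ and the factorisation of $n$; if $\gcd(n,M)=1$ we correctly report that no nontrivial $P$ exists.

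I expect the main obstacle to be the group-structure / torsion bookkeeping rather than the root-finding: one must be sure that a generating pair for $E(\F_q)$, and the discrete-log-type expression of $R$ in that basis, can be obtained deterministically in polynomial time. This is standard when $M$ is known and smooth enough, but writing it out carefully — in particular solving the small discrete logarithms that arise, which live in groups of order dividing the smooth number $M$ — is the part that needs the most attention. Everything else is a direct application of Schoof's algorithm and Theorem \ref{thm-solving polynomial equation}.
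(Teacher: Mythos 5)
Your approach is genuinely different from the paper's and, as written, it has a real gap in exactly the place you flagged. The paper never computes $\#E(\F_q)$ or the group structure at all: it uses the multiplication-by-$n$ rational map $n(x,y)=\bigl(U_1(x)/V_1(x),\,y\,U_2(x)/V_2(x)\bigr)$ directly. For $Q=(a,b)\neq\infty$ it solves the single polynomial equation $U_1(x)-aV_1(x)=0$ of degree $n^2=O(\POLY(\log q))$ via Theorem~\ref{thm-solving polynomial equation}, then pins down the $y$-coordinate for each root $\alpha_i$ using $y^2-(\alpha_i^3+a_4\alpha_i+a_6)=0$ and $yU_2(\alpha_i)-bV_2(\alpha_i)=0$; for $Q=\infty$ it instead finds the $\F_q$-rational roots of $V_1$. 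The paper even remarks explicitly that the number of points of $E(\F_q)$ is not needed.

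The gap in your route is the deterministic group-structure and discrete-log bookkeeping. You justify it by claiming a basis can be found ``when $M$ is known and smooth enough'' and by writing $\ell\mid M\mid(q-1)(\text{something})$, but neither is warranted: $q\in{\cal Q}_1$ makes $q-1$ smooth, not $M=\#E(\F_q)$, and the prime factors of $M$ are not the $r_j$'s attached to $q-1$. Only $d_1$ (where $E(\F_q)\cong\Z/d_1\Z\times\Z/d_2\Z$, $d_1\mid d_2$) is forced to divide $q-1$; $d_2$ can have arbitrary large prime factors, so factoring $M$ and finding a generating pair is not available deterministically with the tools in this paper. Even restricting attention to the $\ell$-Sylow subgroup for a small prime $\ell\mid n$ does not rescue this: a generator of that subgroup has order $\ell^{a_2}$ with $\ell^{a_2}$ potentially as large as $\Theta(\sqrt q)$, and to locate such a point you would either need the $\ell^{a_2}$-division polynomial (degree $\Theta(\ell^{2a_2})$, not polynomial in $\log q$) or repeated division by $\ell$ in $E(\F_q)$ --- which is the very operation you are trying to construct, i.e.\ the argument is circular. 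By contrast, the paper's single reduction to root-finding for $U_1-aV_1$ sidesteps all of this, which is precisely why it can afford to be indifferent to $\#E(\F_q)$.
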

\section{Taking $r$-th Roots}\label{sect-r th root}
Let $\F_q$ be a finite field with $q$ elements.
Suppose 
\begin{eqnarray}
\beta &=& \alpha^r\in\F_q
\end{eqnarray}
for some $\alpha\in\F_q$ and some integer $r>1$.
The problem of \emph{taking $r$-th roots} over $\F_q$
is to find $\alpha$, 
given a finite field $\F_q$,
an element $\beta$ and an integer $r$.
If $r$ does not divide $q-1$,
the problem is easy.
If $r$ is a composite number,
we may first compute $\gamma$,
an $n$-th root of $\beta$ for $n$ a prime factor of $r$,
and then compute an $(r/n)$-th root of $\gamma$ to obtain $\alpha$.
Therefore,
assume that $r$ is a prime divisor of $q-1$.

The problem of taking $r$-th roots is reduced
to \emph{finding a non-trivial factor of $x^r-\beta$} over $\F_q$.
We label the following input items
and then show Algorithm~\ref{alg-rth root of beta} below.
\begin{description}
\item[(F):]
$\F_q$,
which is a finite field with $q$ elements.
\item[(R):]
$r$,
which is a prime divisor of $q-1$.
\item[(B):]
$\beta$,
which is an $r$-th residue in $\F_q$.
\end{description}

\begin{algorithm}[Compute an $r$-th root of $\beta$]\label{alg-rth root of beta}
The inputs are 
the ones specified in (F), (R) and (B);
and $f(x)$,
where $f(x)\in\F_q[x]$ is a monic non-trivial factor of $x^r-\beta$.
This algorithm returns an $r$-th root of $\beta$.
\end{algorithm}
\begin{enumerate}
\item
Let $n=\deg f$
and $c_0\in\F_q$ be the constant term of $f(x)$,
\item
Find integers $u,v$ by the Euclidean algorithm such that $un+vr=1$.
\item
Return $(-1)^{nu}c_0^u\beta^v$.
\end{enumerate}

\begin{lemma}\label{lem-non-trivial factor <=> r th root}
Algorithm~\ref{alg-rth root of beta} is correct.
\end{lemma}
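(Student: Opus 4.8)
The plan is to exploit the fact that, since $r\mid q-1$, the polynomial $x^r-\beta$ splits into distinct linear factors over $\F_q$ itself, and that the constant term $c_0$ of the supplied factor $f$ is therefore $(-1)^{\deg f}$ times a product of roots; this pins down $c_0$ as a power of an $r$-th root of $\beta$, up to a harmless root-of-unity twist, from which step~3 recovers a root.

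Concretely, I would proceed as follows. Dispose of the case $\beta=0$ first (then $\alpha=0$ is an $r$-th root and one checks the algorithm returns $0$), so assume $\beta\in\F_q^\times$. Since $r$ divides $q-1$, fix a primitive $r$-th root of unity $\zeta\in\F_q^\times$; since $\beta$ is an $r$-th residue, write $\beta=\alpha^r$ with $\alpha\in\F_q^\times$, so that
\[
x^r-\beta=\prod_{i=0}^{r-1}\bigl(x-\zeta^i\alpha\bigr),
\]
a product of $r$ distinct linear polynomials in $\F_q[x]$. As $f$ is a monic factor of degree $n$, unique factorization in $\F_q[x]$ forces $f(x)=\prod_{i\in S}(x-\zeta^i\alpha)$ for some $S\subseteq\{0,1,\dots,r-1\}$ with $|S|=n$, whence its constant term is
\[
c_0=(-1)^n\alpha^n\zeta^{s},\qquad s=\sum_{i\in S}i .
\]
Because $f$ is a \emph{non-trivial} factor, $0<n<r$, and $r$ is prime, so $\gcd(n,r)=1$; this is exactly what lets step~2 produce integers $u,v$ with $un+vr=1$. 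Substituting and using $\zeta^r=1$,
\[
(-1)^{nu}c_0^{\,u}\beta^{\,v}=(-1)^{nu}\bigl((-1)^n\alpha^n\zeta^{s}\bigr)^{u}(\alpha^r)^{v}=\alpha^{\,un+vr}\zeta^{\,su}=\alpha\,\zeta^{\,su},
\]
and $(\alpha\zeta^{su})^r=\alpha^r(\zeta^r)^{su}=\beta$, so the returned value is an $r$-th root of $\beta$.

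There is no real obstacle here: the verification is routine algebra once the splitting of $x^r-\beta$ is in hand. The two points worth stating carefully are (a) the algorithm does not recover the particular $\alpha$ with $\beta=\alpha^r$ but rather $\alpha\zeta^{su}$, which is fine precisely because each $\zeta^i\alpha$ is itself an $r$-th root of $\beta$; and (b) the non-triviality hypothesis on $f$ is used (only) to guarantee $\gcd(n,r)=1$, so that step~2 is well defined. I would also remark in passing that $r\neq\operatorname{char}\F_q$ (otherwise $r\mid q$ and $r\mid q-1$ force $r\mid1$), which is what makes $x^r-\beta$ separable and the factorization above valid.
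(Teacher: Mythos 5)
Your proof is correct and follows essentially the same route as the paper's: identify $f$ as a subproduct of the distinct linear factors $x-\zeta^i\alpha$, read off $c_0=(-1)^n\alpha^n\zeta^s$, use $\gcd(n,r)=1$ (from non-triviality and primality of $r$) to get $u,v$, and compute $(-1)^{nu}c_0^u\beta^v=\alpha\zeta^{su}$, a valid $r$-th root. You add a few welcome details the paper leaves implicit --- why $c_0$ has the stated form, the separability remark $r\neq\operatorname{char}\F_q$ --- though your aside about $\beta=0$ is not quite watertight (if $u<0$ then $c_0^u=0^u$ is undefined), so it is cleaner to simply assume $\beta\neq 0$ as the paper tacitly does.
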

\begin{proof}
Let $\rho$ be a primitive $r$-th root of unity in $\F_q$.
Since 
\[
x^r-\beta=\prod_{j=0}^{r-1}(x-\rho^j\alpha),
\]
we have $c_0=(-1)^n\rho^k\alpha^n$ for some integer $k$.
We also have $(n,r)=1$ because $0<n<r$ and $r$ is a prime.
There exist integers $u,v$ such that $un+vr=1$.
Finally,
\[
(-1)^{nu}c_0^u\beta^v=\rho^{ku}\alpha,
\]
is an $r$-th root of $\beta$.
The lemma follows.
\end{proof}

\subsection{Find a Non-trivial Factor of $x^r-\beta$}
We extend the square root algorithm in \cite{stw2011sqrt}
to show a deterministic algorithm,
Algorithm~\ref{alg-factor(x^r-beta)},
for finding a non-trivial factor of $x^r-\beta$.
Unlike other algorithms,
such as the generalized Shanks's algorithm,
Algorithm~\ref{alg-factor(x^r-beta)}
does not require any $r$-th nonresidue as an input
and the associated proofs do not assume any unproven hypothesis.
Similar to \cite{stw2011sqrt},
Algorithm~\ref{alg-factor(x^r-beta)}
requires finding primitive roots of unity.
It is obvious that \emph{finding an $N$-th primitive root of unity}
is not harder than \emph{finding an $N$-th nonresidue}
because,
given an $N$-th nonresidue,
an $N$-th primitive root of unity can be easily computed.
Below are some known cases that
primitive roots of unity can be computed efficiently;
see \cite{stw2011sqrt} for more details.
Let $p$ be the characteristic of $\F_q$.
Denote a fixed primitive $k$-th root of unity in $\F_q$ by $\zeta_k$.
\renewcommand{\labelenumi}{(\roman{enumi})}\begin{enumerate}
\item
$\zeta_2$ or $\zeta_3$
when $p\equiv 1\pmod{12}$.
\item
$\zeta_{2\cdot3^n+1}$ for $n\geq 1$
when $2\cdot3^n+1$ is a prime and $p\equiv 13,25\pmod{36}$.
\item
$\zeta_r$
when $q=r^e t+1$ with $t$ small.
\end{enumerate}\renewcommand{\labelenumi}{\arabic{enumi}.}

The arithmetic of the square root algorithm in \cite{stw2011sqrt}
is carried out over a specially constructed group,
$\G$,
which is isomorphic to $\F_q^\times$ and a degenerated elliptic curve.
\emph{Taking square root} is obviously equivalent to
\emph{finding a non-trivial factor of $x^2-\beta$}.
It is possible to formulate the algorithm in \cite{stw2011sqrt}
so that the arithmetic is carried out over the ring $\F_q[x]/(x^2-\beta)$
for factoring the polynomial $x^2-\beta$.
We generalize this idea and work on the ring $\F_q[x]/(x^r-\beta)$
in Algorithm~\ref{alg-factor(x^r-beta)}.
When $r=2$,
Algorithm~\ref{alg-factor(x^r-beta)} and
the algorithm in \cite{stw2011sqrt} are essentially the same.

The ``problem'' of working on the ring $\F_q[x]/(x^r-\beta)$ is that 
there are zero divisors.
However, 
if we have a zero divisor $f(x)$,
then 
\[
\GCDrth{f(x)}
\]
is a non-trivial factor of $x^r-\beta$.
This idea is similar to Lenstra's elliptic curve integer factoring algorithm
\cite{Lenstra1987}.
He works on the ring $\Z/n\Z$ for some composite integer $n$,
try to find a zero divisor $z$ in $\Z/n\Z$
and then $(z,n)$ is a non-trivial factor of $n$.

If $q-1$ is not divisible by $r^2$,
it is easy to compute $\alpha$.
Thus,
assume 
\begin{equation}\label{eqn-r^2|(q-1)}
r^2\mid(q-1).
\end{equation}
As in equation~(\ref{eqn-q=r_1^e_1...r_m^e_m t+1}),
write
\[
q-1=r_1^{e_1}\cdots r_m^{e_m}t.
\]
Without loss of generality,
assume $r_1=r$.
Note that $e_1\geq 2$ by assumption~(\ref{eqn-r^2|(q-1)}).
Once the $r_j$'s are fixed,
the partial factorization of $q-1$ can be computed easily.
Algorithm~\ref{alg-factor(x^r-beta)} applies to any finite field
but it is efficient only if $q\in\overline{\cal Q}$;
see definition~(\ref{eqn-overline F}).
We present Algorithm~\ref{alg-factor(x^r-beta)} below
and discuss the details in the following sections.
Note that it returns immediately
once Algorithms~\ref{alg-find a},
\ref{alg-find ell} or \ref{alg-find k_0}
have returned a non-trivial factor of $x^r-\beta$.
\begin{description}
\item[(R'):]
$r$ which satisfies (R) and (\ref{eqn-r^2|(q-1)}).
\item[(Q):]
$r_1,\ldots,r_m$, $e_1,\ldots,e_m$ and $t$
such that $r_1=r$
and $q-1=r_1^{e_1}\cdots r_m^{e_m}t$ is the partial factorization
satisfied equation~(\ref{eqn-q=r_1^e_1...r_m^e_m t+1}).
\end{description}

\begin{algorithm}[Find a non-trivial factor of $x^r-\beta$]
\label{alg-factor(x^r-beta)}
The inputs are
the ones specified in assumptions~(Q), (F), (R') and (B).
This algorithm returns a non-trivial factor of $x^r-\beta$.
\end{algorithm}
\begin{enumerate}
\item[I:]
If $r=2$ and $\beta=1$,
return $x+1$. \\
If $r=2$ and $\beta=-1$,
return $x+\sqrt{-1}$. \\
Otherwise,
compute $\rho=\zeta_r$, a primitive $r$-th root of unity.
\item[II:]
Find $a$ by Algorithm~\ref{alg-find a} with $k=rt$.
\item[III:]
Find $\ell$ by Algorithm~\ref{alg-find ell}.
\item[IV:]
Find $k_0$ by Algorithm~\ref{alg-find k_0}.
\item[V:]
Find a non-trivial factor $f(x)$ of $x^r-\beta$
by Algorithm~\ref{alg-split x^r-beta}. \\
Return $f(x)$.
\end{enumerate}

\subsubsection{Find $a$ such that $\GCDrth{g_{a,k}(x)}=1$}
For $a\in\F_q$, $a^r\neq\beta$,
define a rational function 
\begin{eqnarray}\label{eqn-psi}
\psi_a(x) &\DEF{=}& \frac{a-x}{a-\rho x}\in\F_q(x).
\end{eqnarray}
For $0\leq i<r$,
let 
\begin{eqnarray}
\label{eqn-c_i}
c_i &\DEF{=}& \psi_a(\rho^i\alpha)\in\F_q^\times; \\
\label{eqn-d_i}
d_i &\DEF{=}& \ORD c_i,
\end{eqnarray}
the order of $c_i$ over $\F_q^\times$.
In other words,
we have
\begin{eqnarray*}
\psi_a(x) &\equiv& c_i\pmod{x-\rho^i\alpha}; \\
\psi_a(x)^{d_i} &\equiv& 1\pmod{x-\rho^i\alpha}.
\end{eqnarray*}
Instead of working with the rational function $\psi_a$ directly,
define polynomials,
\begin{eqnarray}
\label{eqn-g_k(x,y,z)}
g_k(x,y,z) &\DEF{=}& (y-x)^k-z(y-\rho x)^k\in\F_q[x,y,z]; \\
\label{eqn-g_(a,k)(x)}
g_{a,k}(x) &\DEF{=}& g_k(x,a,1)\in\F_q[x],
\end{eqnarray}
for $k>0$.
We have the following lemma.
\begin{lemma}\label{lem-d_i cases}
Let $k$ be a positive integer.
\renewcommand{\labelenumi}{(\arabic{enumi})}\begin{enumerate}
\item
$d_i$ divides $k$ for all $0\leq i<r$
if and only if
\begin{eqnarray*}
g_{a,k}(x) &\equiv& 0\pmod{x^r-\beta}.
\end{eqnarray*}
\item
There exists $i$, $j$ 
such that $d_i$ divides $k$ but $d_j$ does not divide $k$
if and only if
\[
\GCDrth{g_{a,k}(x)}
\]
is a non-trivial factor of $x^r-\beta$.
\item
$d_i$ does not divide $k$ for all $0\leq i<r$
if and only if
\begin{eqnarray*}
\GCDrth{g_{a,k}(x)} &=& 1.
\end{eqnarray*}
\end{enumerate}\renewcommand{\labelenumi}{\arabic{enumi}.}
\end{lemma}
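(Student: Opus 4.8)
The plan is to route the whole argument through the Chinese Remainder Theorem isomorphism of $\F_q[x]/(x^r-\beta)$, prove one identity that pins down $\GCDrth{g_{a,k}(x)}$ exactly, and then read off all three equivalences at once --- their hypotheses (and their conclusions) form a trichotomy, so a single identity suffices. We may assume $\beta\neq 0$, since if $\beta=0$ then $\alpha=0$ and there is nothing to compute. First I would note that $r\mid q-1$ forces $p\nmid r$, where $p=\operatorname{char}\F_q$, so $x^r-\beta=\prod_{i=0}^{r-1}(x-\rho^i\alpha)$ is separable; its $r$ roots $\rho^i\alpha$ are pairwise distinct, the evaluation maps $h(x)\mapsto h(\rho^i\alpha)$ induce a ring isomorphism $\F_q[x]/(x^r-\beta)\cong\F_q\times\cdots\times\F_q$ ($r$ factors), and consequently, for any $h\in\F_q[x]$, the monic $\GCDrth{h(x)}$ equals $\prod_{i:\,h(\rho^i\alpha)=0}(x-\rho^i\alpha)$, a factor of $x^r-\beta$ of degree $\#\{i:h(\rho^i\alpha)=0\}$ (with the convention $(0,x^r-\beta)=x^r-\beta$).

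Next I would evaluate $g_{a,k}$ at the roots. Using $g_{a,k}(x)=(a-x)^k-(a-\rho x)^k$ together with $c_i=\psi_a(\rho^i\alpha)=(a-\rho^i\alpha)/(a-\rho^{i+1}\alpha)$ from (\ref{eqn-psi}) and (\ref{eqn-c_i}),
\[
g_{a,k}(\rho^i\alpha)=(a-\rho^i\alpha)^k-(a-\rho^{i+1}\alpha)^k=(a-\rho^{i+1}\alpha)^k\bigl(c_i^{\,k}-1\bigr).
\]
Since $a^r\neq\beta$ rules out $a=\rho^j\alpha$ for every $j$, the prefactor $(a-\rho^{i+1}\alpha)^k$ is nonzero, so $g_{a,k}(\rho^i\alpha)=0$ if and only if $c_i^{\,k}=1$, i.e.\ if and only if $d_i\mid k$ (as $d_i=\ORD c_i$). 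Plugging this into the description of the gcd above yields the master identity
\[
\GCDrth{g_{a,k}(x)}=\prod_{i:\,d_i\mid k}(x-\rho^i\alpha).
\]

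Finally, the three statements are exactly the three cases for the index set $S=\{\,i:d_i\mid k\,\}$. If $S=\{0,\dots,r-1\}$, the right-hand side is $x^r-\beta$, i.e.\ $g_{a,k}(x)\equiv 0\pmod{x^r-\beta}$, giving (1); if $\emptyset\neq S\neq\{0,\dots,r-1\}$, the right-hand side has degree strictly between $0$ and $r$, hence is a non-trivial factor of $x^r-\beta$, giving (2); and if $S=\emptyset$, the right-hand side is $1$, giving (3). The reverse implications are the converse readings of the same trichotomy. The only delicate points are the separability of $x^r-\beta$ and the non-vanishing of $a-\rho^{i+1}\alpha$, both immediate from $p\nmid r$, $\beta\neq 0$, and $a^r\neq\beta$; I do not expect a genuine obstacle here, the argument being essentially this bookkeeping.
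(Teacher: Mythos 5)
Your proof is correct, and it is the detailed version of what the paper dismisses as ``It is straightforward'' --- the definitions of $\psi_a$, $c_i$, $d_i$ and $g_{a,k}$ in the surrounding text point to exactly this argument, so you are filling in the intended route rather than taking a different one. The key computation $g_{a,k}(\rho^i\alpha)=(a-\rho^{i+1}\alpha)^k(c_i^k-1)$, together with the separability of $x^r-\beta$ (from $p\nmid r$, $\beta\neq0$) and the non-vanishing of $a-\rho^{i+1}\alpha$ (from $a^r\neq\beta$), gives the clean identity $\GCDrth{g_{a,k}(x)}=\prod_{d_i\mid k}(x-\rho^i\alpha)$, from which all three parts read off as the trichotomy on $\#\{i:d_i\mid k\}$; no gap.
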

\begin{proof}
It is straightforward.
\end{proof}
For the cases in Lemma~\ref{lem-d_i cases},
case~(1) is not useful to our algorithm.
We show in the lemma below that
the number of possible values of $a$'s 
falling into this case is bounded above by $k$.
If case~(2) occurs,
we are done.
Otherwise,
we find an $a$ falling into case~(3)
in Algorithm~\ref{alg-find a}.

\begin{lemma}\label{lem-at most k a}
There are at most $k$ distinct $a\in\F_q$ 
such that $a^r\neq\beta$
and 
\begin{eqnarray*}
g_{a,k}(x) &\equiv& 0 \pmod{x^r-\beta}.
\end{eqnarray*}
\end{lemma}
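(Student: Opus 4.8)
The plan is to turn the divisibility condition into a single polynomial identity in the unknown $a$, and then to bound the number of roots by a degree count.

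First I would write $\beta=\alpha^r$ with $\alpha\in\F_q$ (this is exactly the hypothesis (B), that $\beta$ is an $r$-th residue) and recall that $\rho=\zeta_r$ is a primitive $r$-th root of unity, so $\rho\neq 1$ since $r>1$. We may assume $\beta\neq 0$, since $\beta=0$ is the trivial case (and, strictly, one must exclude it here, as otherwise every nonzero $a$ satisfies the congruence and the claimed bound fails); thus $\alpha\neq 0$. Now, since $\alpha$ is a root of $x^r-\beta$ in $\F_q$, any $a\in\F_q$ with $g_{a,k}(x)\equiv 0\pmod{x^r-\beta}$ must satisfy $g_{a,k}(\alpha)=0$, that is
\[
(a-\alpha)^k=(a-\rho\alpha)^k .
\]
So the set of admissible $a$ is contained in the zero set, inside $\F_q$, of the one–variable polynomial $P(T)\DEF{=}(T-\alpha)^k-(T-\rho\alpha)^k\in\F_q[T]$, and it suffices to show $P$ has at most $k$ roots.

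The second step is to check that $P$ is not the zero polynomial and has degree at most $k-1$. For the former, evaluate at $T=\alpha$: one gets $P(\alpha)=-\big(\alpha(1-\rho)\big)^k$, which is nonzero because $\alpha\neq 0$, $1-\rho\neq 0$, and $\F_q$ has no zero divisors. For the latter, the degree-$k$ terms of $(T-\alpha)^k$ and $(T-\rho\alpha)^k$ are both $T^k$, so they cancel in $P$ and $\deg P\leq k-1$. Hence $P$ has at most $k-1$ roots in $\F_q$, and therefore there are at most $k-1\leq k$ values of $a\in\F_q$ with $g_{a,k}(x)\equiv 0\pmod{x^r-\beta}$, which in particular bounds the number of such $a$ with $a^r\neq\beta$. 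The lemma follows.

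I do not expect a genuine obstacle: the only subtle points are the two mild ones flagged above — that the edge case $\beta=0$ has to be set aside, and that $P\not\equiv 0$ must be justified. The evaluation $P(\alpha)\neq 0$ settles the second point in every characteristic, so no case analysis on whether the characteristic divides $k$ is needed. The real content of the argument is simply the reduction in the first step: divisibility by $x^r-\beta$ is tested, for our purposes, by evaluation at the single root $\alpha$, which is what collapses the problem to a degree count.
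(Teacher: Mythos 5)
Your argument is correct and essentially the same as the paper's: both evaluate the congruence at the root $\alpha$ of $x^r-\beta$ and then bound the admissible $a$ by a root count --- your polynomial $P(T)=(T-\alpha)^k-(T-\rho\alpha)^k$ is exactly the numerator of the paper's $\psi_T(\alpha)^k-1$, whose roots the paper bounds via the injectivity of $a\mapsto\psi_a(\alpha)$ together with the fact that at most $k$ elements of $\F_q^\times$ satisfy $c^k=1$. Your explicit flag that $\alpha\neq0$ (equivalently $\beta\neq0$) is needed is a fair point; the paper's proof silently requires the same, since $\psi_a(0)=1$ identically, though in context one would never run the algorithm on $\beta=0$.
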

\begin{proof}
Suppose that,
for $1\leq i\leq k+1$,
we have $a_i\in\F_q$,
$a_i^r\neq \beta$
and $g_{a_i,k}(x)\equiv 0 \pmod{x^r-\beta}$.
Then, $g_{a_i,k}(\alpha)=0$ and so $\psi_{a_i}(\alpha)^k=1$.
Since $\psi_a(\alpha)\neq \psi_b(\alpha)$ whenever $a\neq b$,
there are $k+1$ distinct elements in $\F_q$
such that the multiplicative orders of all these elements divide $k$.
It is a contradiction.
The lemma follows.
\end{proof}

We show Algorithm~\ref{alg-find a} below.
Note that the $\rho$,
which is computed in Algorithm~\ref{alg-factor(x^r-beta)} Step~I,
is used for computing $g_{a_i,k}(x)$ in II.2.
\begin{description}
\item[(Z):]
$\rho$,
where $\rho=\zeta_r\in\F_q$ is a primitive $r$-th root of unity.
\end{description}

\begin{algorithm}[Find $a$]
\label{alg-find a}
The inputs are
the ones specified in (F), (R'), (B) and (Z);
and $k$, where $k>1$ is an integer.
This algorithm either
returns a non-trivial factor of $x^r-\beta$,
or returns $a\in\F_q$
such that 
\begin{equation}\label{eqn-a}
a^r\neq \beta
\qquad\text{and}\qquad
\GCDrth{g_{a,k}(x)}=1.
\end{equation}
\end{algorithm}
\begin{enumerate}
\item[II:]
Consider $k+1$ distinct elements $a_1,\ldots,a_{k+1}\in\F_q$.
\begin{enumerate}
\item[II.1:]
If there exists $i$ such that $a_{i}^r=\beta$, \\
return $x-a_i$.
\item[II.2:]
If there exists $i$ such that 
$f(x)=\GCDrth{g_{a_i,k}(x)}$ \\
is a non-trivial factor $x^r-\beta$, \\
return $f(x)$.
\item[II.3:]
Set $a=a_j$ for some $1\leq j\leq k+1$ 
such that $\GCDrth{g_{a_j,k}(x)}=1$. \\
Return $a$.
\end{enumerate}
\end{enumerate}
\begin{lemma}\label{lem-find a}
Algorithm~\ref{alg-find a} is correct.
\end{lemma}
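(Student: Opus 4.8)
The plan is to verify that each of the three possible return statements outputs something valid, and that the algorithm always reaches one of them. First I would handle II.1: if $a_i^r = \beta$ for some $i$, then $a_i$ is literally an $r$-th root of $\beta$, so $x - a_i$ divides $x^r - \beta$; since $r > 1$, this is a proper (non-trivial) factor, and the return is legitimate. Next, II.2 is immediate from Lemma~\ref{lem-d_i cases}(2): if $\GCDrth{g_{a_i,k}(x)}$ is already a non-trivial factor of $x^r-\beta$, return it. So the only substantive content is II.3: I must show that if the algorithm has passed through II.1 and II.2 without returning, then there genuinely exists an index $j$ with $a_j^r \neq \beta$ and $\GCDrth{g_{a_j,k}(x)} = 1$, so that the set from which $a$ is chosen is non-empty.

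For II.3, the argument combines Lemma~\ref{lem-d_i cases} and Lemma~\ref{lem-at most k a}. Having skipped II.1 means $a_i^r \neq \beta$ for all $i = 1, \ldots, k+1$, so each $a_i$ is admissible as an argument to $\psi_{a_i}$ and the polynomials $g_{a_i,k}(x)$ are well-defined in the relevant sense. Having skipped II.2 means that for every $i$, the gcd $\GCDrth{g_{a_i,k}(x)}$ is \emph{not} a non-trivial factor of $x^r-\beta$; by the trichotomy of Lemma~\ref{lem-d_i cases}, this forces each $a_i$ into either case~(1) (where $g_{a_i,k}(x) \equiv 0 \pmod{x^r-\beta}$) or case~(3) (where the gcd equals $1$). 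By Lemma~\ref{lem-at most k a}, at most $k$ of the $k+1$ elements $a_1, \ldots, a_{k+1}$ can lie in case~(1). Hence at least one index $j$ lies in case~(3), i.e.\ satisfies $a_j^r \neq \beta$ and $\GCDrth{g_{a_j,k}(x)} = 1$, which is exactly condition~(\ref{eqn-a}). Therefore II.3 always succeeds in selecting a valid $a$, and the algorithm is correct.

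I do not expect any real obstacle here; the lemma is essentially a bookkeeping consequence of the two preceding lemmas, and the only thing to be careful about is confirming that the three branches of Algorithm~\ref{alg-find a} correspond precisely to the three cases of Lemma~\ref{lem-d_i cases} (plus the degenerate branch II.1 where $a_i$ is itself a root), and that the counting bound $k$ from Lemma~\ref{lem-at most k a} is strictly smaller than the number $k+1$ of candidates tried. If anything needs a line of justification, it is the implicit claim that the $g_{a_i,k}(x)$ appearing in the algorithm (computed via $\rho$ supplied in (Z)) agree with the polynomials $g_{a,k}(x)$ of equation~(\ref{eqn-g_(a,k)(x)}) used in the lemmas — but this is true by definition, since $\rho = \zeta_r$ is exactly the primitive $r$-th root of unity used throughout.
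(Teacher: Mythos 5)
Your proof is correct and follows the same route as the paper: dispatch II.1 and II.2 as immediate, then invoke Lemma~\ref{lem-at most k a} to conclude that among the $k+1$ candidates at least one falls into case~(3) of Lemma~\ref{lem-d_i cases}, so II.3 succeeds. The only difference is that you spell out the trichotomy bookkeeping which the paper leaves implicit.
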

\begin{proof}
The algorithm is obviously correct if it returns at II.1 or II.2.
Otherwise,
there exist $1\leq j\leq k+1$ such that $\GCDrth{g_{a_j,k}(x)}=1$
by Lemma \ref{lem-at most k a}.
The lemma follows.
\end{proof}

\subsubsection{Find $\ell=r_{j_0}$ such that $\GCDrth{g_{a,h_{j_0}}(x)}=1$}
Let
\begin{eqnarray*}
h_j &\DEF{=}& \begin{cases}
(q-1)/r^{e_1-1} & \text{, if }j=1; \\
(q-1)/r_j^{e_j} & \text{, otherwise.}
\end{cases}
\end{eqnarray*}
Algorithm~\ref{alg-find a} is executed with $k=rt$
in Algorithm~\ref{alg-factor(x^r-beta)} Step~II.
Algorithm~\ref{alg-find ell} is shown below.
\begin{description}
\item[(A):]
$a$,
where $a\in\F_q$ satisfies condition~(\ref{eqn-a}) with $k=rt$.
\item[(L):]
$\ell$,
where $\ell=r_{j_0}$ for some $1\leq j_0\leq m$
such that
\begin{eqnarray*}
\GCDrth{g_{a,h_{j_0}}(x)} &=& 1.
\end{eqnarray*}
\end{description}

\begin{algorithm}[Find $\ell$]
\label{alg-find ell}
The inputs are 
the ones specified in (Q), (F), (R'), (B), (Z) and (A).
This algorithm either returns a non-trivial factor of $x^r-\beta$,
or returns an integer $\ell$ satisfying (L).

\end{algorithm}
\begin{enumerate}
\item[III.1:]
If there exist $1\leq j\leq m$ such that $f(x)=\GCDrth{g_{a,h_j}(x)}$ \\
is a non-trivial factor of $x^r-\beta$, \\
return $f(x)$.
\item[III.2:]
Set $\ell=r_{j_0}$ for some $1\leq j_0\leq m$
such that $\GCDrth{g_{a,h_{j_0}}(x)}=1$. \\
Return $\ell$.
\end{enumerate}

\begin{lemma}\label{lem-find ell}
Algorithm~\ref{alg-find ell} is correct.
\end{lemma}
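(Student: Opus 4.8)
The plan is to distinguish the two places where Algorithm~\ref{alg-find ell} can return and to rule out silent failure by a short gcd computation. If the algorithm returns at Step~III.1, correctness is immediate: the polynomial $f(x)=\GCDrth{g_{a,h_j}(x)}$ it hands back is, by the test in III.1, a non-trivial factor of $x^r-\beta$, which is exactly what Lemma~\ref{lem-d_i cases}(2) characterizes. So the only real content is to show that Step~III.2 always succeeds whenever III.1 does not, i.e.\ that there is some index $j_0$ with $\GCDrth{g_{a,h_{j_0}}(x)}=1$.

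Suppose not. Then the algorithm reaches III.2 and for every $1\leq j\leq m$ the gcd $\GCDrth{g_{a,h_j}(x)}$ is neither $1$ (else III.2 would succeed) nor a non-trivial factor of $x^r-\beta$ (else III.1 would have fired). By the trichotomy of Lemma~\ref{lem-d_i cases} — its three cases are exclusive and exhaustive — the only remaining possibility, case~(1), holds for every $j$: namely $d_i\mid h_j$ for all $0\leq i<r$. Fixing $i$, it follows that $d_i$ divides $\gcd(h_1,\ldots,h_m)$.

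The next step is to evaluate that gcd by $p$-adic valuations, using $q-1=r_1^{e_1}\cdots r_m^{e_m}t$, $r_1=r$, $e_1\geq 2$, and the definition of the $h_j$. For the prime $r$ one has $v_r(h_1)=e_1-(e_1-1)=1$ while $v_r(h_j)=e_1$ for $j\geq 2$, so $v_r$ of the gcd is $1$. For a prime $r_j$ with $j\geq 2$ one has $v_{r_j}(h_j)=0$, so $v_{r_j}$ of the gcd is $0$. For every prime $p\mid t$, every $h_j$ still carries the full factor $t$, so $v_p$ of the gcd is $v_p(t)$. Hence $\gcd(h_1,\ldots,h_m)=rt$, and therefore $d_i\mid rt$ for all $0\leq i<r$.

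This contradicts hypothesis~(\ref{eqn-a}) on the input $a$ (which holds with $k=rt$): the condition $\GCDrth{g_{a,rt}(x)}=1$ means, by Lemma~\ref{lem-d_i cases}(3), precisely that $d_i\nmid rt$ for all $0\leq i<r$. So the assumed failure cannot occur; Step~III.2 returns a valid $\ell=r_{j_0}$ satisfying (L), and the algorithm is correct. The one place that needs care is the valuation bookkeeping in the gcd computation — in particular the deliberately asymmetric definition of $h_1$, which is rigged to retain a single factor of $r$ so that $\gcd(h_1,\ldots,h_m)$ comes out to $rt$ (and not just $t$), matching the bound $k=rt$ enforced earlier; the rest is a direct reading of Lemma~\ref{lem-d_i cases}.
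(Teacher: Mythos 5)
Your proof is correct and follows the same route as the paper's: reduce to the case that every $\GCDrth{g_{a,h_j}(x)}$ is a trivial factor, rule out the all--$(x^r-\beta)$ case via $d_i\mid\gcd(h_1,\ldots,h_m)=rt$, and invoke assumption~(A) with Lemma~\ref{lem-d_i cases}(3) for the contradiction. The only difference is that you spell out the $p$-adic valuation bookkeeping behind $\gcd(h_1,\ldots,h_m)=rt$, which the paper asserts without detail; that computation is correct.
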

\begin{proof}
The algorithm is obviously correct if it returns at III.1.
Otherwise,
$\GCDrth{g_{a,h_j}(x)}$ for $1\leq j\leq m$ are trivial factors of $x^r-\beta$.

Suppose,
for all $1\leq j\leq m$,
\begin{eqnarray*}
g_{a,h_j}(x) &\equiv& 0 \pmod{x^r-\beta}.
\end{eqnarray*}
Then,
\[
\psi_a(x)^{h_1}
\equiv\cdots\equiv
\psi_a(x)^{h_m}\equiv 1\pmod{x^r-\beta},
\]
or equivalently,
\begin{eqnarray*}
\psi_a(\rho^i\alpha)^{h_j} &=& 1
\end{eqnarray*}
for all $0\leq i<r$ and all $1\leq j\leq m$.
Recall that $d_i$ is
the multiplicative order of $\psi_a(\rho^i\alpha)$ in $\F_q^\times$
defined in equation~(\ref{eqn-d_i}).
For all $0\leq i<r$ and all $1\leq j\leq m$,
we have
\[
d_i\mid h_j.
\]
Since $rt=\gcd(h_1,\ldots,h_m)$,
we have
\[
d_i\mid r t.
\]
It is a contradiction
because $d_i$ does not divide $rt$ for all $0\leq i<r$
by assumption~(A) and Lemma~\ref{lem-d_i cases} case (3).
The lemma follows.
\end{proof}

\subsubsection{Find $k_0$
such that $D_{k'}(x)=x^r-\beta$ for $0\leq k'\leq k_0$
and $D_{k''}(x)=1$ for $k_0<k''\leq e'$}
Let
\begin{eqnarray}\label{eqn-e'}
e' &\DEF{=}& \begin{cases}
e_1 - 1, & \text{if } \ell=r; \\
e_{j_0}, & \text{otherwise}.
\end{cases}
\end{eqnarray}
Define polynomials
\begin{eqnarray}\label{eqn-D_k}
D_i(x) &\DEF{=}& \GCDrth{g_{a,(q-1)/\ell^i}(x)}\in\F_q[x]
\end{eqnarray}
for $0\leq i\leq e'$.
By Lemma~\ref{lem-d_i cases} case~(1) with $k=q-1$,
\begin{eqnarray*}
D_0(x) &=& x^r-\beta
\end{eqnarray*}
and,
by assumption (L),
\begin{eqnarray*}
D_{e'}(x) &=& 1.
\end{eqnarray*}
We show Algorithm~\ref{alg-find k_0} below.
\begin{description}
\item[(K):]
$k_0$
such that,
for all $0\leq k'\leq k_0$ and all $k_0<k''\leq e'$,
\[
D_{k'}(x) = x^r-\beta
\qquad\text{and}\qquad
D_{k''}(x) = 1.
\]
\end{description}

\begin{algorithm}[Find $k_0$]
\label{alg-find k_0}
The inputs are 
the ones specified in (Q), (F), (R'), (B), (Z), (A) and (L).
This algorithm either returns a non-trivial factor of $x^r-\beta$,
or returns an integer $k_0$ satisfying (K).
\end{algorithm}
\begin{enumerate}
\item[IV.1:]
Compute $D_k(x)$ by definition~(\ref{eqn-D_k}) for all $0\leq k\leq e'$.
\item[IV.2:]
If there exist $0<k<e'$ \\
such that $D_{k}(x)$ is a non-trivial factor of $x^r-\beta$, \\
return $D_{k}(x)$.
\item[IV.3:]
Set $k_0$ to be the largest $k$ such that $D_k(x)=x^r-\beta$. \\
Return $k_0$.
\end{enumerate}
\begin{lemma}\label{lem-find k_0}
Algorithm~\ref{alg-find k_0} is correct.
\end{lemma}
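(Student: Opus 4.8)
The plan is to show that, whenever Algorithm~\ref{alg-find k_0} does not return at Step~IV.2, the integer $k_0$ produced at Step~IV.3 genuinely satisfies property~(K). The engine behind the argument is the divisibility chain
\[
\frac{q-1}{\ell^{e'}} \mid \cdots \mid \frac{q-1}{\ell} \mid (q-1),
\]
combined with the dictionary supplied by Lemma~\ref{lem-d_i cases}.

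First I would dispose of the trivial case: if the algorithm returns at Step~IV.2, it returns a non-trivial factor of $x^r-\beta$ by construction, hence is correct. So assume it does not return there. Then for every $0<k<e'$ the polynomial $D_k(x)$ is \emph{not} a non-trivial factor of $x^r-\beta$; together with $D_0(x)=x^r-\beta$ and $D_{e'}(x)=1$ (both established just before the statement, via Lemma~\ref{lem-d_i cases} case~(1) with $k=q-1$ and via assumption~(L) respectively), we conclude that every $D_k(x)$ with $0\le k\le e'$ equals either $1$ or $x^r-\beta$, since these are the only monic trivial factors.

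Next I would translate the condition ``$D_k(x)=x^r-\beta$'' into arithmetic. By Lemma~\ref{lem-d_i cases} case~(1), $D_k(x)=x^r-\beta$ if and only if $d_i$ divides $(q-1)/\ell^{k}$ for all $0\le i<r$; by case~(3), $D_k(x)=1$ if and only if $d_i$ divides $(q-1)/\ell^{k}$ for no $i$. Since $(q-1)/\ell^{k}$ divides $(q-1)/\ell^{k-1}$ for $1\le k\le e'$, the property ``$d_i\mid(q-1)/\ell^{k}$ for all $i$'' is preserved when $k$ decreases. Hence the set $S=\{\,k: 0\le k\le e',\ D_k(x)=x^r-\beta\,\}$ is downward closed. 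As $0\in S$ and $e'\notin S$, the value $k_0=\max S$ is well defined with $0\le k_0<e'$ and $S=\{0,1,\ldots,k_0\}$; this is precisely the $k_0$ returned at Step~IV.3, giving $D_{k'}(x)=x^r-\beta$ for all $0\le k'\le k_0$. Finally, for $k_0<k''\le e'$ we have $k''\notin S$, so $D_{k''}(x)\ne x^r-\beta$; being trivial by the previous paragraph, it must equal $1$. This establishes (K).

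I do not anticipate a serious obstacle: the content is entirely the observation that the exponents $(q-1)/\ell^{k}$ form a divisibility chain, which forces $S$ to be an initial segment. The only points demanding care are invoking the divisibility in the correct direction (decreasing $k$) and the bookkeeping that turns ``trivial but not $x^r-\beta$'' into ``equal to $1$''.
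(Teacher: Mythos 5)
Your proof is correct and follows essentially the same route as the paper: you unfold what the paper factors out into Lemma~\ref{lem-D_k(x)=x^r-beta for 0<=k<=k'}, namely that the divisibility chain $(q-1)/\ell^{e'}\mid\cdots\mid(q-1)$ together with Lemma~\ref{lem-d_i cases} case~(1) makes the set $\{k : D_k(x)=x^r-\beta\}$ downward closed, and then combine this with $D_0=x^r-\beta$, $D_{e'}=1$, and the triviality of every $D_k$ to pin down $k_0$.
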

\begin{proof}
The algorithm is obviously correct if it returns at IV.2.
Suppose all $D_k(x)$ are trivial factors of $x^r-\beta$.
By Lemma~\ref{lem-D_k(x)=x^r-beta for 0<=k<=k'} below,
there exists $0\leq k_0<e'$ satisfying (K).
The lemma follows.
\end{proof}
\begin{lemma}\label{lem-D_k(x)=x^r-beta for 0<=k<=k'}
If $D_i(x)=x^r-\beta$ for some $0\leq i<e'$,
then $D_{k'}(x)=x^r-\beta$ for all $0\leq k' \leq i$.
\end{lemma}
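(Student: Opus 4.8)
The plan is to reduce the claim to a monotonicity statement about which orders $d_i$ divide $(q-1)/\ell^{k'}$ as $k'$ grows, and then to invoke Lemma~\ref{lem-d_i cases}. Recall that $D_i(x)=\GCDrth{g_{a,(q-1)/\ell^i}(x)}$, and that by Lemma~\ref{lem-d_i cases} the three mutually exclusive possibilities for $D_i(x)$ are: $D_i(x)=x^r-\beta$ (all $d_j$ divide $(q-1)/\ell^i$), $D_i(x)=1$ (no $d_j$ divides $(q-1)/\ell^i$), or $D_i(x)$ is a non-trivial factor of $x^r-\beta$ (some but not all $d_j$ divide $(q-1)/\ell^i$). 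So the hypothesis $D_i(x)=x^r-\beta$ is equivalent to saying $d_j \mid (q-1)/\ell^i$ for every $0\leq j<r$.

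The key observation is that the exponents $(q-1)/\ell^{k'}$ are \emph{multiples} of $(q-1)/\ell^{i}$ whenever $k'\leq i$: indeed $(q-1)/\ell^{k'} = \ell^{i-k'}\cdot (q-1)/\ell^{i}$. Therefore, if $d_j$ divides $(q-1)/\ell^{i}$, it automatically divides $(q-1)/\ell^{k'}$ for every $k'$ with $0\leq k'\leq i$. Assuming $D_i(x)=x^r-\beta$, this gives $d_j \mid (q-1)/\ell^{k'}$ for all $0\leq j<r$ and all $0\leq k'\leq i$, which by Lemma~\ref{lem-d_i cases} case~(1) means exactly $D_{k'}(x)=x^r-\beta$ for all such $k'$. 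That is the whole argument; no case analysis beyond applying Lemma~\ref{lem-d_i cases} is needed. One should also note that $\ell$ is either $r=r_1$ or some $r_{j_0}$ dividing $q-1$, and in each case $e'$ is chosen in equation~(\ref{eqn-e'}) precisely so that $\ell^{e'}$ still divides $q-1$; hence all the exponents $(q-1)/\ell^{i}$ for $0\leq i\leq e'$ are genuine positive integers, and the polynomials $g_{a,(q-1)/\ell^i}(x)$ are well defined.

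I do not expect a serious obstacle here: the statement is essentially the remark that divisibility by a fixed integer is preserved under passing to multiples, repackaged through Lemma~\ref{lem-d_i cases}. The only point requiring a line of care is the bookkeeping that $\ell^{i-k'}$ is a nonnegative power (so that $(q-1)/\ell^{k'}$ really is an integer multiple of $(q-1)/\ell^{i}$), which is immediate from $k'\leq i\leq e'$ and the definition of $e'$. If one wanted to be fully explicit, one could phrase it as: the function $i\mapsto \bigl(\text{truth value of ``}D_i(x)=x^r-\beta\text{''}\bigr)$ is non-increasing in $i$ on $\{0,\dots,e'\}$, which is exactly what is used in the proof of Lemma~\ref{lem-find k_0} to justify that $k_0$ is well defined.
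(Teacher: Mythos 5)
Your argument is exactly the paper's, just written out in full: the paper's proof is the one-liner ``It follows from the case (1) of Lemma~\ref{lem-d_i cases},'' and your divisibility observation (that $(q-1)/\ell^{k'}$ is a multiple of $(q-1)/\ell^{i}$ for $k'\leq i$, so case~(1) propagates downward) is precisely what that one-liner is implicitly invoking. Correct, same approach.
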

\begin{proof}
It follows from the case (1) of Lemma \ref{lem-d_i cases}.
\end{proof}

\subsubsection{Split $x^r-\beta$}
Equipped with conditions (A), (L) and (K),
we are ready to split $x^r-\beta$.
Below is the key lemma.

\begin{lemma}\label{lem-n_i != n_j}
Let $N>1$ be a prime power such that $N\neq r$.
Let $D$ be a positive integer.
Suppose,
for $0\leq i<r$, 
\begin{eqnarray*}
\psi_a(\rho^i\alpha)^D &=& \zeta_N^{n_i}
\end{eqnarray*}
for some integer $n_i\in(\Z/N\Z)^\times$,
where $a\in\F_q$ such that $a^r\neq\beta$,
and $\zeta_N$ is a primitive $N$-th root of unity.
There exist $i$ and $j$ such that
\begin{eqnarray*}
n_i &\neq& n_j.
\end{eqnarray*}
\end{lemma}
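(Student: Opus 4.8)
The plan is to argue by contradiction: suppose $n_i = n$ for all $0 \leq i < r$, i.e.\ $\psi_a(\rho^i\alpha)^D = \zeta_N^n$ for a single fixed $n \in (\Z/N\Z)^\times$. The idea is to exploit the fact that the $r$ values $\psi_a(\rho^i\alpha)$ are the roots (evaluated at the $r$ conjugates $\rho^i\alpha$) of a single rational function $\psi_a$ of degree one in $x$, so they are highly constrained; forcing all their $D$-th powers to coincide should be incompatible with $\psi_a$ being a genuine degree-one M\"obius map and with $N \neq r$.

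First I would reformulate the hypothesis in the ring $R = \F_q[x]/(x^r-\beta)$. By the Chinese Remainder Theorem over the splitting field, the assumption $\psi_a(\rho^i\alpha)^D = \zeta_N^n$ for all $i$ is equivalent to $g_{a,D'}(x) \equiv 0 \pmod{x^r-\beta}$ for a suitable $D'$ tied to $D$ and $n$ (using the polynomial encoding $g_{a,k}$ from~(\ref{eqn-g_(a,k)(x)}) together with the relation between $\zeta_N^n$ and roots of unity). More directly: the element $\bar\psi := (a - x)/(a - \rho x) \in R^\times$ (a unit since $a^r \neq \beta$ makes $a - \rho x$ invertible mod $x^r - \beta$) satisfies $\bar\psi^D = \zeta_N^n \cdot 1_R$ in $R$. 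Now consider the $\F_q$-algebra automorphism $\sigma$ of $R$ induced by $x \mapsto \rho x$ (well-defined because $(\rho x)^r - \beta = \rho^r x^r - \beta = x^r - \beta$). Under $\sigma$, the element $\bar\psi$ maps to $(a-\rho x)/(a-\rho^2 x)$, and iterating, $\sigma$ permutes the residues $\psi_a(\rho^i\alpha)$ cyclically among the $r$ components. The product $\prod_{i=0}^{r-1}\psi_a(\rho^i\alpha)$ is $\sigma$-invariant, hence lies in $\F_q$; in fact it telescopes to $\prod_i \frac{a-\rho^i\alpha}{a-\rho^{i+1}\alpha} = 1$. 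Raising the (assumed constant) value to the $r$-th power, $\zeta_N^{nr} = \prod_i \psi_a(\rho^i\alpha)^D = 1$, so $N \mid nr$; since $\gcd(n,N)=1$ this gives $N \mid r$, and as $N$ is a prime power with $N \neq r$ and $r$ is prime, the only possibility is $N = 1$, contradicting $N > 1$.

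I expect the main obstacle to be making the telescoping/product argument fully rigorous at the level of $R$ versus the splitting field — specifically, checking that the $n_i$ are genuinely well-defined (the hypothesis presupposes $\psi_a(\rho^i\alpha)^D$ is an $N$-th root of unity, so this is given) and that "raising the constant value to the $r$-th power equals the product over $i$" is legitimate, which it is precisely because we assumed all $n_i$ equal. A secondary point to handle carefully is the edge case analysis when $N$ is a prime power $N = p^s$ with $p \mid r$: since $r$ is prime this forces $p = r$ and then $N \mid r$ forces $s = 1$, $N = r$, excluded by hypothesis. One should also note the telescoping identity $\prod_{i=0}^{r-1}(a - \rho^i\alpha) = \prod_{i=0}^{r-1}(a - \rho^{i+1}\alpha)$ holds because raising $\rho$ to successive powers cyclically permutes $\{0,1,\dots,r-1\}$ mod $r$, so no separate computation of $a^r - \beta$ is even needed. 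Once these are in place the contradiction is immediate and the lemma follows.
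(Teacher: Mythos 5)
Your proof is correct, and it takes a genuinely different route from the paper's. The paper forms the weighted telescoping sum $\sum_{i=0}^{r-1}\zeta^i g_D(\rho^i\alpha,a,\zeta)$, which collapses to $(a-\alpha)^D(1-\zeta^r)$ and forces $\zeta^r=1$. You instead observe the telescoping \emph{product} identity
$\prod_{i=0}^{r-1}\psi_a(\rho^i\alpha)=\prod_{i=0}^{r-1}\frac{a-\rho^i\alpha}{a-\rho^{i+1}\alpha}=1$,
so that under the hypothesis $\psi_a(\rho^i\alpha)^D=\zeta_N^n$ for all $i$ one gets $\zeta_N^{nr}=\bigl(\prod_i\psi_a(\rho^i\alpha)\bigr)^D=1$ at once, giving $N\mid nr$, then $N\mid r$ since $(n,N)=1$, and then $N=r$ since $r$ is prime and $N>1$ --- contradicting $N\neq r$. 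The two arguments land in the same place ($\zeta^r=1$), but yours is more elementary: it never needs the auxiliary polynomial $g_D(x,y,z)$ or the choice of summation weights. The paper's weighted-sum template is reused verbatim in the companion Lemma~\ref{lem-g_d(x,a,x^n)}, where the weights become $\rho^{s_in}\zeta^{in}$; it is worth noting that your product telescoping adapts cleanly there as well (one gets $\prod_i(\rho^i\zeta)^n=\rho^{s_rn}\zeta^{rn}=\zeta^{rn}$, same contradiction). One stylistic remark: the detour through the ring $R=\F_q[x]/(x^r-\beta)$ and the automorphism $\sigma:x\mapsto\rho x$ is motivating framing but does no work --- the telescoping product is a direct identity among the $r$ field values $\psi_a(\rho^i\alpha)$, and nothing in the contradiction uses the ring structure. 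Also, the prime-power hypothesis on $N$ is not actually needed for the final step: $N\mid r$ with $r$ prime and $N>1$ already forces $N=r$.
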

\begin{proof}
Suppose 
\[
n_0=\cdots=n_{r-1}=n
\]
for some integer $n$ with $(n,N)=1$.
Let $\zeta=\zeta_N^n$.
We have 
\[
\psi_a(\alpha)^D
=\psi_a(\rho\alpha)^D
=\cdots
=\psi_a(\rho^{r-1}\alpha)^D
=\zeta,
\]
which is equivalent to
\[
g_D(\alpha,a,\zeta)
=g_D(\rho\alpha,a,\zeta)
=\cdots
=g_D(\rho^{r-1}\alpha,a,\zeta)
=0.
\]
By definition (\ref{eqn-g_k(x,y,z)}),
\begin{eqnarray*}
g_D(\rho^i\alpha,a,\zeta) &=& (a-\rho^i\alpha)^D-\zeta(a-\rho^{i+1}\alpha)^D.
\end{eqnarray*}
Then,
\[
(a-\alpha)^D(1-\zeta^r)
\;\;=\;\; \sum_{i=0}^{r-1}\zeta^ig_D(\rho^i\alpha,a,\zeta)
\;\;=\;\; 0.
\]
Thus, 
$\zeta^r=1$ since $a\neq\alpha$.
It is a contradiction because $N$ does not divide $r$.
The lemma follows.
\end{proof}

We show Algorithm~\ref{alg-split x^r-beta} below.
Define
\begin{eqnarray*}
d &\DEF{=}& \begin{cases}
(q-1)/\ell^{k_0+2}, & \text{if } \ell=r; \\
(q-1)/\ell^{k_0+1}, & \text{otherwise}.
\end{cases}
\end{eqnarray*}

\begin{algorithm}[Split $x^r-\beta$]\label{alg-split x^r-beta}
The inputs are
the ones specified in (Q), (F), (R'), (B), (Z), (A), (L) and (K).
In addition,
assume $r\neq2$ when $\beta^r=1$.
This algorithm returns a non-trivial factor of $x^r-\beta$.
\end{algorithm}
\begin{enumerate}
\item[V.1:]
Case $\ell\neq r$:
\begin{enumerate}
\item[V.1.1:]
Compute $\zeta_\ell$,
a primitive $\ell$-th root of unity.
\item[V.1.2:]
For each $0<n<\ell$, \\
compute $f_n(x)=\GCDrth{g_d(x,a,\zeta_\ell^n)}$, \\
return $f_{n}(x)$ if $f_n(x)$ is a non-trivial factor of $x^r-\beta$.
\end{enumerate}
\item[V.2:]
Case $\ell=r$ and $\beta^r\neq 1$:
\begin{enumerate}
\item[V.2.1:]
Compute $\zeta_{r^2}$,
a primitive $r^2$-th root of unity,
recursively.
In other words,
use Algorithm~\ref{alg-rth root of beta}
and Algorithm~\ref{alg-factor(x^r-beta)} with $\beta=\zeta_r$.
\item[V.2.2:]
For each $n\in(\Z/r^2\Z)^\times$, \\
compute $f_n(x)=\GCDrth{g_d(x,a,\zeta_{r^2}^n)}$, \\
return $f_{n}(x)$ if $f_n(x)$ is a non-trivial factor of $x^r-\beta$.
\end{enumerate}
\item[V.3:]
Case $\ell=r\neq 2$ and $\beta^r=1$:
\begin{enumerate}
\item[V.3.1:]
For each $n\in(\Z/r^2\Z)^\times$, \\
compute $f_n(x)=\GCDrth{g_d(x,a,x^n)}$, \\
return $f_{n}(x)$ if $f_n(x)$ is a non-trivial factor of $x^r-\beta$.
\end{enumerate}
\end{enumerate}

\begin{lemma}\label{lem-split x^r-beta V.1}
Algorithm~\ref{alg-split x^r-beta} Step~V.1 is correct.
\end{lemma}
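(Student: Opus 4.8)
The plan is to read off, from conditions~(A), (L) and~(K), exactly what the multiplicative orders $d_i = \ORD\psi_a(\rho^i\alpha)$ look like at the prime $\ell$, and then to feed this information into Lemma~\ref{lem-n_i != n_j}. Throughout we are in the case $\ell\neq r$, so $\ell = r_{j_0}$ is a prime dividing $q-1$ to exact power $e' = e_{j_0}$, and $d = (q-1)/\ell^{k_0+1}$.

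First I would pin down the $\ell$-adic valuation of each $d_i$. Condition~(K) gives $D_{k_0}(x) = x^r-\beta$ and $D_{k_0+1}(x) = 1$; note $k_0 < e'$, since $D_0(x) = x^r-\beta$ while $D_{e'}(x) = 1$, so the index $k_0+1$ is legitimate. By Lemma~\ref{lem-d_i cases} case~(1) with $k = (q-1)/\ell^{k_0}$, the first identity forces $d_i \mid (q-1)/\ell^{k_0}$ for every $i$; by Lemma~\ref{lem-d_i cases} case~(3) with $k = (q-1)/\ell^{k_0+1}$, the second forces $d_i \nmid (q-1)/\ell^{k_0+1}$ for every $i$. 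Since $\ell^{e'}$ exactly divides $q-1$, the prime-to-$\ell$ part of $d_i$ divides the prime-to-$\ell$ part of $q-1$; combining this with the two divisibility conditions pins down $v_\ell(d_i) = e'-k_0$ for all $i$.

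Next I would compute $\ORD(c_i^{\,d})$. Since $v_\ell(d) = e'-k_0-1$ and the prime-to-$\ell$ part of $d$ equals the full prime-to-$\ell$ part of $q-1$, a short gcd computation gives $\ORD(c_i^{\,d}) = d_i/\gcd(d_i,d) = \ell$ for every $i$. Thus $c_i^{\,d} = \psi_a(\rho^i\alpha)^d$ is a primitive $\ell$-th root of unity, say $c_i^{\,d} = \zeta_\ell^{\,n_i}$ with $n_i\in(\Z/\ell\Z)^\times$. Now Lemma~\ref{lem-n_i != n_j}, applied with the prime power $N = \ell\neq r$ and exponent $D = d$, supplies indices $i,j$ with $n_i\neq n_j$; in particular the $n_i$ are not all equal. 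Finally I would identify the polynomials computed in V.1.2: reducing $g_d(x,a,\zeta_\ell^{\,n})$ modulo $x-\rho^i\alpha$ and using $a^r\neq\beta$ (so $a-\rho^{i+1}\alpha\neq0$), one finds $(x-\rho^i\alpha)\mid g_d(x,a,\zeta_\ell^{\,n})$ in $\F_q[x]/(x^r-\beta)$ precisely when $\zeta_\ell^{\,n} = c_i^{\,d}$, i.e.\ when $n\equiv n_i\pmod\ell$. Hence $f_n(x) = \GCDrth{g_d(x,a,\zeta_\ell^{\,n})} = \prod_{i:\,n_i\equiv n}(x-\rho^i\alpha)$, a divisor of $x^r-\beta = \prod_{i=0}^{r-1}(x-\rho^i\alpha)$. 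Because the $n_i$ are not all equal, choosing $n = n_{i_0}$ for a suitable $i_0$ makes this product a non-empty proper sub-product, hence a non-trivial factor of $x^r-\beta$; and every such $n$ lies in $\{1,\dots,\ell-1\}$, exactly the range swept by the loop, so Step~V.1 must return a non-trivial factor.

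The main obstacle is the valuation bookkeeping in the first two steps: one must verify carefully that hypotheses~(K) and~(L) really force $\ORD(c_i^{\,d}) = \ell$ for \emph{all} $i$ simultaneously (not merely for some $i$), since it is precisely this uniformity that makes Lemma~\ref{lem-n_i != n_j} applicable. The remaining manipulations — the reduction of $g_d$ modulo $x-\rho^i\alpha$ and the description of the GCD — are routine.
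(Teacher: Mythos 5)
Your proof is correct and follows essentially the same route as the paper: extract from condition (K) that each $d_i$ divides $\ell d$ but not $d$, conclude $\psi_a(\rho^i\alpha)^d$ is a primitive $\ell$-th root of unity, and invoke Lemma~\ref{lem-n_i != n_j} with $N=\ell$, $D=d$ to get two distinct exponents and hence a proper nontrivial factor among the $f_n$. The only stylistic difference is that your $\ell$-adic valuation bookkeeping is more elaborate than necessary — the paper deduces $\ORD(c_i^d)=\ell$ directly from $d_i\mid\ell d$ and $d_i\nmid d$, since $\ell$ is prime — but the content and the appeal to the key lemma are identical.
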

\begin{proof}
Recall that $d_i$ is defined in equation~(\ref{eqn-d_i}).
For all $0\leq i<r$,
we have
\[
d_i\mid\ell d
\qquad\text{and}\qquad
d_i\nmid d
\]
by assumption~(K).
Since $\F_q^\times$ is cyclic,
\begin{eqnarray*}
\psi_a(\rho^i\alpha)^d &=& \zeta_\ell^{n_i}
\end{eqnarray*}
for some $0<n_i<\ell$.
Let $g_n(x)\DEF{=}g_d(x,a,\zeta_\ell^n)\in\F_q[x]$.
Then,
\begin{eqnarray*}
g_{n_0}(x) &\equiv& 0 \pmod{x-\alpha}.
\end{eqnarray*}
By Lemma \ref{lem-n_i != n_j} with $N=\ell$ and $D=d$,
there exists $0<j<r$ such that
\begin{eqnarray*}
g_{n_0}(x) &\not\equiv& 0 \pmod{x-\rho^j\alpha}.
\end{eqnarray*}
Therefore,
$\GCDrth{g_{n_0}(x)}$ is a non-trivial factor of $x^r-\beta$.
The remaining question is how to find $n_0$?
It is not required.
For $0<n<\ell$,
compute 
\[
\GCDrth{g_n(x)}
\]
in order to find a non-trivial factor of $x^r-\beta$.
The lemma follows.
\end{proof}

\begin{lemma}\label{lem-split x^r-beta V.2}
Algorithm~\ref{alg-split x^r-beta} Step~V.2 is correct.
\end{lemma}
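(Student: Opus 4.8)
The plan is to run the argument of Lemma~\ref{lem-split x^r-beta V.1} with the cyclic group $\langle\zeta_\ell\rangle$ of order $\ell$ replaced by $\langle\zeta_{r^2}\rangle$ of order $r^2$, and with Lemma~\ref{lem-n_i != n_j} invoked at $N=r^2$. The first task is to convert condition~(K) into information about the orders $d_i$. Since $\ell=r$ we have $e'=e_1-1$, and because $D_0(x)=x^r-\beta\neq1=D_{e'}(x)$ we get $k_0<e'$, hence $k_0+2\leq e_1$ and $d=(q-1)/r^{k_0+2}$ is an integer with $r^2d=(q-1)/r^{k_0}$ and $rd=(q-1)/r^{k_0+1}$. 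From $D_{k_0}(x)=x^r-\beta$ and Lemma~\ref{lem-d_i cases}, case~(1), we get $d_i\mid r^2d$ for all $0\leq i<r$, so $\psi_a(\rho^i\alpha)^{r^2d}=1$; from $D_{k_0+1}(x)=1$ and Lemma~\ref{lem-d_i cases}, case~(3), we get $d_i\nmid rd$, so $\psi_a(\rho^i\alpha)^{rd}\neq1$. Hence the order of $\psi_a(\rho^i\alpha)^d$ in $\F_q^\times$ is a divisor of $r^2$ that does not divide $r$, i.e. exactly $r^2$, so $\psi_a(\rho^i\alpha)^d=\zeta_{r^2}^{n_i}$ for some $n_i\in(\Z/r^2\Z)^\times$.

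Next I would apply Lemma~\ref{lem-n_i != n_j} with $N=r^2$ (a prime power with $r^2\neq r$) and $D=d$, obtaining indices $i$ and $j$ with $n_i\neq n_j$ in $\Z/r^2\Z$. Writing $g_n(x)=g_d(x,a,\zeta_{r^2}^n)$ and using the identity $g_d(\rho^i\alpha,a,z)=(a-\rho^{i+1}\alpha)^d\big(\psi_a(\rho^i\alpha)^d-z\big)$ together with $a\neq\rho^{i+1}\alpha$ for all $i$ (since $a^r\neq\beta$), one sees that $g_{n_i}(\rho^i\alpha)=0$ while $g_{n_i}(\rho^j\alpha)\neq0$. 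Therefore $x-\rho^i\alpha$ divides $f_{n_i}(x)=\GCDrth{g_{n_i}(x)}$ but $x-\rho^j\alpha$ does not, so $f_{n_i}(x)$ is a non-trivial factor of $x^r-\beta$. Since Step~V.2.2 iterates over all $n\in(\Z/r^2\Z)^\times$, which includes $n_i$, the loop returns a non-trivial factor.

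The one genuinely new ingredient compared with Step~V.1 is Step~V.2.1: one must verify that the recursive call actually produces a primitive $r^2$-th root of unity and that the recursion is well-founded. Here I would argue that $\zeta_r$ is an $r$-th residue precisely because assumption~(R') gives $r^2\mid q-1$, so $x^r-\zeta_r$ has a root in $\F_q$ and every such root is a primitive $r^2$-th root of unity; moreover the recursive invocation of Algorithm~\ref{alg-factor(x^r-beta)} is made with $\beta=\zeta_r$, which satisfies $\beta^r=1$ and hence is handled by Step~V.3 (or, when $r=2$, directly by Step~I of Algorithm~\ref{alg-factor(x^r-beta)}), so it does not recurse back into Step~V.2 and the depth is bounded. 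I expect this bookkeeping around the recursion to be the only delicate point; the order computation and the factor-extraction above are routine.
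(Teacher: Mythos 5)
Your proof is correct and follows essentially the same route as the paper: derive $d_i\mid r^2d$ and $d_i\nmid rd$ from condition~(K), conclude $\psi_a(\rho^i\alpha)^d=\zeta_{r^2}^{n_i}$ with $n_i\in(\Z/r^2\Z)^\times$, and invoke Lemma~\ref{lem-n_i != n_j} with $N=r^2$, $D=d$ to obtain a nontrivial split. The extra bookkeeping about well-foundedness of the recursive call to produce $\zeta_{r^2}$ is correct and worth noting, but the paper addresses it in the surrounding discussion rather than inside this lemma's proof.
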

\begin{proof}
Similar to the proof of the previous lemma,
for all $0\leq i<r$,
\[
d_i\mid r^2 d
\qquad\text{and}\qquad
d_i\nmid r d
\]
by assumption (K).
We have
\begin{eqnarray*}
\psi_a(\rho^i\alpha)^d &=& \zeta_{r^2}^{n_i}
\end{eqnarray*}
for some $n_i\in(\Z/r^2\Z)^\times$.
Let $g_n(x)\DEF{=}g_d(x,a,\zeta_{r^2}^n)\in\F_q[x]$.
Then,
\begin{eqnarray*}
g_{n_0}(x) &\equiv& 0 \pmod{x-\alpha}; \\
g_{n_0}(x) &\not\equiv& 0 \pmod{x-\rho^j\alpha}
\end{eqnarray*}
for some $0<j<r$
by Lemma \ref{lem-n_i != n_j} with $N=r^2$ and $D=d$.
For each $n\in(\Z/r^2\Z)^\times$,
compute 
\[
\GCDrth{g_n(x)}
\]
to find a non-trivial factor of $x^r-\beta$.
The lemma follows.
\end{proof}

In the case $\ell=r$,
a primitive $r^2$-th root of unity, 
$\zeta_{r^2}$,
is required.
Interestingly,
$\zeta_{r^2}$ can be computed recursively
--- by taking $r$-th root of $\rho$,
or equivalently,
by finding a non-trivial factor of $x^r-\rho$.
Execute Algorithm~\ref{alg-factor(x^r-beta)} with $\beta=\rho$
and denote the output of Step~III by $\ell'$.
If $\ell'\neq r$,
we proceed with Step~V.1.
Otherwise,
we have $\ell'=r$.
Then,
\begin{equation}\label{eqn-(g_d(x,a,zeta_(r^2)^n), x^r-rho)}
\GCD{g_d(x,a,\zeta_{r^2}^n)}{x^r-\rho}
\end{equation}
is a non-trivial factor of $x^r-\rho$ for some $n$.
Nevertheless,
the gcd cannot be computed directly because $\zeta_{r^2}$ is not available.
The idea is to replace $\zeta_{r^2}$ with $x$.
In other words,
use $g_d(x,a,x^n)$,
instead of $g_d(x,a,\zeta_{r^2}^n)$,
in (\ref{eqn-(g_d(x,a,zeta_(r^2)^n), x^r-rho)}).
This idea does not work for the case $\ell=r=2$ and $\beta^r=1$,
which is handled separately in Step~I.
We have the following lemma.

\begin{lemma}\label{lem-g_d(x,a,x^n)}
Suppose $r$ is an odd prime.
If 
\begin{eqnarray*}
g_d(x,a,x^n) &\equiv& 0\pmod{x-\zeta_{r^2}}
\end{eqnarray*}
for some $n\in(\Z/r^2\Z)^\times$,
there exists $0<i<r$
such that 
\begin{eqnarray*}
g_d(x,a,x^n) &\not\equiv& 0 \pmod{x-\rho^i\zeta_{r^2}}.
\end{eqnarray*}
\end{lemma}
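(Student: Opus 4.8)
The plan is to mirror the proof of Lemma~\ref{lem-n_i != n_j}, adapted to the fact that the constant ``$\zeta_N$'' there has been replaced by the indeterminate $x$. Write $\xi=\zeta_{r^2}$. Since $\xi$ is obtained as an $r$-th root of $\rho$, we have $\xi^r=\rho$ and $\xi$ has order $r^2$, so the roots of $x^r-\rho$ are exactly $\rho^i\xi$ for $0\le i<r$; moreover $a^r\ne\beta=\rho$ by assumption~(A), hence $a\ne\rho^i\xi$ for every $i$. I would argue by contradiction: suppose the conclusion fails, i.e. $g_d(x,a,x^n)\equiv 0\pmod{x-\rho^i\xi}$ for all $0<i<r$ as well; combined with the hypothesis (the case $i=0$), this then holds for every $0\le i<r$.

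The key observation --- and the only real difference from Lemma~\ref{lem-n_i != n_j} --- is that substituting $x=\rho^i\xi$ into $g_d(x,a,x^n)$ feeds $\rho^i\xi$ into \emph{both} the $(y-x)$-slots and the $z$-slot of $g_d$ (see~(\ref{eqn-g_k(x,y,z)})), so the $r$ identities take the form
\[
(a-\rho^i\xi)^d \;=\; (\rho^i\xi)^n\,(a-\rho^{i+1}\xi)^d,\qquad 0\le i<r,
\]
in which the multiplier $(\rho^i\xi)^n$ now genuinely depends on $i$ rather than being a fixed constant. Multiplying these $r$ identities together and cancelling the nonzero product $\prod_i(a-\rho^i\xi)^d$ --- using $\rho^r=1$ to re-index, which is exactly the point at which the case $i=0$ closes the telescoping loop --- yields
\[
1 \;=\; \prod_{i=0}^{r-1}(\rho^i\xi)^n \;=\; \rho^{\,nr(r-1)/2}\,\xi^{\,nr}.
\]

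Finally I would exploit that $r$ is odd: then $(r-1)/2$ is an integer, so $nr(r-1)/2$ is divisible by $r$ and $\rho^{\,nr(r-1)/2}=1$; the relation collapses to $\xi^{nr}=1$, and since $\xi$ has order $r^2$ this forces $r\mid n$, contradicting $n\in(\Z/r^2\Z)^\times$. Hence the conclusion holds. I do not anticipate a genuine obstacle here; the two things to get right are (i) noticing that replacing $\zeta_{r^2}$ by $x$ makes the multiplier at the root $\rho^i\xi$ equal to $(\rho^i\xi)^n$, which introduces the extra factor $\rho^{\,nr(r-1)/2}$, and (ii) checking that this factor is harmless precisely because $r$ is odd --- which is also the reason the case $r=2$, $\beta^r=1$ must be peeled off in Step~I. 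A stylistic variant, closer to Lemma~\ref{lem-n_i != n_j}, is to take the weighted sum $\sum_i\lambda_i\,g_d(\rho^i\xi,a,(\rho^i\xi)^n)$ with $\lambda_i=\prod_{j<i}(\rho^j\xi)^n$ and use $a\ne\xi$; this produces the same identity $\rho^{\,nr(r-1)/2}\xi^{nr}=1$.
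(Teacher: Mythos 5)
Your proof is correct and takes essentially the same approach as the paper: the paper combines the $r$ identities via the weighted telescoping sum $\sum_{i}\rho^{s_in}\zeta^{in}g_d\bigl(\rho^i\zeta,a,(\rho^i\zeta)^n\bigr)$ with $s_i=i(i-1)/2$ (precisely the ``stylistic variant'' you mention at the end), whereas your main route multiplies the identities and cancels $\prod_i(a-\rho^i\zeta_{r^2})^d$. Both collapse to the same relation $\rho^{nr(r-1)/2}\zeta_{r^2}^{nr}=1$, both use oddness of $r$ to kill the $\rho$-factor, and both derive the contradiction from $\ORD\zeta_{r^2}=r^2$ and $n\in(\Z/r^2\Z)^\times$.
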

\begin{proof}
Let $\zeta=\zeta_{r^2}$.
Suppose 
\begin{eqnarray*}
g_d(x,a,x^n) &\equiv& 0\pmod{x-\rho^i\zeta}
\end{eqnarray*}
for all $0\leq i<r$.
Then,
\[
g_d\big(\zeta,a,\zeta^n\big)
=g_d\big(\rho\zeta,a,(\rho\zeta)^n\big)
=\cdots
=g_d\big(\rho^{r-1}\zeta,a,(\rho^{r-1}\zeta)^n\big)
=0.
\]
Let
\[
s_k
\;\;\DEF{=}\;\; \sum_{i=0}^{k-1}i
\;\;=\;\; k(k-1)/2.
\]
Note that $r$ divides $s_r$.
By definition (\ref{eqn-g_k(x,y,z)}),
\begin{eqnarray*}
g_d\big(\rho^i\zeta,a,(\rho^i\zeta)^n\big)
&=& (a-\rho^i\zeta)^d-\rho^{in}\zeta^n(a-\rho^{i+1}\zeta)^d.
\end{eqnarray*}
Then,
\begin{eqnarray*}
0
&=& \sum_{i=0}^{r-1}\rho^{s_in}\zeta^{in} g_d\big(\rho^i\zeta,a,(\rho^i\zeta)^n\big) \\
&=& (a-\zeta)^d(1-\rho^{s_rn}\zeta^{rn}) \\
&=& (a-\zeta)^d(1-\zeta^{rn}).
\end{eqnarray*}
Since $a\neq\zeta$,
we have $\zeta^{rn}=1$.
It is a contradiction.
The lemma follows.
\end{proof}

\begin{lemma}\label{lem-split x^r-beta V.3}
Algorithm~\ref{alg-split x^r-beta} Step~V.3 is correct.
\end{lemma}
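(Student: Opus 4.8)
The plan is to run the argument of Lemmas~\ref{lem-split x^r-beta V.1} and~\ref{lem-split x^r-beta V.2} once more, with two substitutions: the root $\alpha$ of $x^r-\beta$ itself will play the role of the (here unavailable) primitive $r^2$-th root of unity, and Lemma~\ref{lem-g_d(x,a,x^n)} will replace Lemma~\ref{lem-n_i != n_j} at the splitting step. I would first set aside the degenerate value $\beta=1$: then $\alpha=1$ is already an $r$-th root of $\beta$ and there is nothing to factor, so I may assume $\beta\neq1$. Since $\beta^r=1$ with $r$ prime, $\beta$ is then a primitive $r$-th root of unity, so any root $\alpha$ of $x^r-\beta$ has multiplicative order exactly $r^2$; because $r^2\mid q-1$, all $r^2$-th roots of unity already lie in $\F_q$, hence $\alpha\in\F_q$ and $x^r-\beta=\prod_{j=0}^{r-1}(x-\rho^j\alpha)$ is a product of $r$ distinct monic linear factors over $\F_q$.

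The exponent bookkeeping is verbatim that of Lemma~\ref{lem-split x^r-beta V.2}: with $\ell=r$ we have $d=(q-1)/r^{k_0+2}$, so $rd=(q-1)/r^{k_0+1}$ and $r^2d=(q-1)/r^{k_0}$, and assumption~(K) together with Lemma~\ref{lem-d_i cases} gives $d_i\mid r^2d$ and $d_i\nmid rd$ for all $0\leq i<r$. Hence each $\psi_a(\rho^i\alpha)^d$ is an $r^2$-th root of unity but not an $r$-th root of unity, so it is a primitive $r^2$-th root of unity, and since these all lie in the cyclic group $\langle\alpha\rangle$ I may write $\psi_a(\rho^i\alpha)^d=\alpha^{n_i}$ with $n_i\in(\Z/r^2\Z)^\times$. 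By definition~(\ref{eqn-g_k(x,y,z)}), the case $i=0$ says precisely $g_d(x,a,x^{n_0})\equiv0\pmod{x-\alpha}$.

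For the split I would invoke Lemma~\ref{lem-g_d(x,a,x^n)}, after noting that neither its statement nor its proof uses anything about $\zeta_{r^2}$ beyond its having order $r^2$ (the fixed $r$-th root $\rho$ enters only through $\rho^r=1$ and $r\mid s_r$), so the lemma applies verbatim with $\zeta_{r^2}$ replaced by our $\alpha$. Applying it with $n=n_0$ --- legitimate since $a\neq\rho^j\alpha$ for all $j$, because $a^r\neq\beta=\alpha^r$ --- the hypothesis $g_d(x,a,x^{n_0})\equiv0\pmod{x-\alpha}$ produces some $0<i<r$ with $g_d(x,a,x^{n_0})\not\equiv0\pmod{x-\rho^i\alpha}$. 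Therefore $\GCDrth{g_d(x,a,x^{n_0})}$ is divisible by $x-\alpha$ but not by $x-\rho^i\alpha$, and since the monic linear factors of $x^r-\beta$ are exactly the $x-\rho^j\alpha$, it is a non-trivial factor. Step~V.3.1 does not know $n_0$, but it tries every $n\in(\Z/r^2\Z)^\times$ and returns the first non-trivial gcd, so it terminates correctly.

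The one substantive point --- the rest being bookkeeping --- is the legitimacy of using $x$ in place of a primitive $r^2$-th root of unity: no such root is at hand, since producing one is itself an instance of the present problem (an $r$-th root of a primitive $r$-th root of unity again satisfies $\beta^r=1$), which is why the recursion of Step~V.2 is unavailable here. The resolution is that modulo $x^r-\beta$ the residue $x$ already behaves like the root $\alpha$, and Lemma~\ref{lem-g_d(x,a,x^n)} is designed so that the telescoped combination $\sum_i\rho^{s_in}x^{in}\,g_d(\rho^ix,a,x^n)$ collapses to $(a-x)^d(1-x^{rn})$, whose vanishing at $x=\alpha$ would force $\alpha^{rn}=1$ --- impossible, since $\gcd(n,r)=1$ makes $\alpha^{rn}$ a primitive $r$-th root of unity. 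I would also flag that this collapse needs $r\mid s_r=r(r-1)/2$, which fails at $r=2$; that is precisely why Step~V.3 excludes $r=2$, the remaining case being absorbed into Step~I.
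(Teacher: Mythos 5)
Your proof is correct and follows essentially the same route as the paper's: derive from (K) that each $\psi_a(\rho^i\alpha)^d$ is a primitive $r^2$-th root of unity, observe that substituting $x$ for the unavailable $\zeta_{r^2}$ in $g_d(x,a,\cdot^n)$ makes the congruence mod $x-\alpha$ reproduce the right relation, and then invoke Lemma~\ref{lem-g_d(x,a,x^n)} to guarantee a split. Your version is somewhat more careful --- it makes explicit the reduction to $\beta\neq1$ (so that $\alpha$ really has order $r^2$), verifies that Lemma~\ref{lem-g_d(x,a,x^n)} depends only on the order of $\zeta_{r^2}$ rather than on any fixed choice, and records why $r=2$ must be excluded --- but the decomposition and the key lemma used are the same.
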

\begin{proof}
Suppose,
for $0\leq i<r$,
\begin{eqnarray*}
\psi_a(\rho^i\zeta_{r^2})^d &=& \zeta_{r^2}^{n_i}
\end{eqnarray*}
for some integer $n_i\in(\Z/r^2\Z)^\times$.
Let $g_n(x)\DEF{=}g_d(x,a,x^n)\in\F_q[x]$.
Consider the polynomial $g_{n_0}(x)$.
We have 
\begin{eqnarray*}
g_{n_0}(x) &\equiv& 0\pmod{x-\zeta_{r^2}}; \\
g_{n_0}(x) &\not\equiv& 0\pmod{x-\rho^j\zeta_{r^2}}
\end{eqnarray*}
for some $0<j<r$
by Lemma~\ref{lem-g_d(x,a,x^n)}.
For each $n\in(\Z/r^2\Z)^\times$,
compute 
\[
\GCD{g_n(x)}{x^r-\rho}
\]
to find a non-trivial factor of $x^r-\rho$.
The lemma follows.
\end{proof}

\subsection{Running Time Analysis}\label{sect-rth root running time}
We analyze the running time of Algorithms~\ref{alg-rth root of beta}
and \ref{alg-factor(x^r-beta)} below.

\begin{lemma}\label{lem-running time rth root of beta}
Algorithm~\ref{alg-rth root of beta} runs in
\[
\tilde{O}(\log r\log q)
\]
bit operations.
\end{lemma}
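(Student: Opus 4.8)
The plan is to step through the three lines of Algorithm~\ref{alg-rth root of beta} and bound each, observing that the running time is dominated by two modular exponentiations in $\F_q$ whose exponents carry only $O(\log r)$ bits.

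First, Step~1 merely reads $n=\deg f$ and the constant term $c_0\in\F_q$ off the input polynomial $f$; since $0<n<r$ this costs at most $\tilde O(\log q)$ bit operations (one coefficient of $\F_q$ together with the $O(\log r)$-bit integer $n$). Second, Step~2 runs the extended Euclidean algorithm on the integers $n$ and $r$ with $0<n<r$. Because $r$ is prime and $0<n<r$ we have $(n,r)=1$, and the coefficients returned by the Euclidean algorithm may be taken with $|u|\le r/2$ and $|v|\le n/2<r/2$; in particular $u$ and $v$ are integers with $O(\log r)$ bits, and Step~2 itself uses $\tilde O(\log^2 r)$ bit operations, which is subsumed in $\tilde O(\log r\log q)$.

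Third, Step~3 evaluates $(-1)^{nu}c_0^u\beta^v$ in $\F_q$. The sign is determined by $nu\bmod 2$ at negligible cost. For each of $c_0^u$ and $\beta^v$, a negative exponent is handled by one inversion in $\F_q$ at cost $\tilde O(\log q)$, after which repeated-squaring exponentiation with an exponent of absolute value $O(r)$ uses $O(\log r)$ multiplications in $\F_q$; by the fast (degree-one) arithmetic bound $\tilde O(\log q)$ per operation recalled in \SECTION{sect-main results}, this is $\tilde O(\log r\log q)$ in total. A final constant number of multiplications in $\F_q$ adds another $\tilde O(\log q)$. Summing the three contributions gives the claimed $\tilde O(\log r\log q)$ bit operations.

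There is no substantial obstacle here; the one point that must not be glossed over is the size bound on the B\'ezout coefficients: one must use the $u,v$ actually produced by the Euclidean algorithm, which are $O(r)$ in magnitude, rather than arbitrary solutions of $un+vr=1$, so that the exponentiations in Step~3 cost $O(\log r)$ and not $O(\log q)$ multiplications in $\F_q$.
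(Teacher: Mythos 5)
Your proof is correct and follows essentially the same route as the paper's much terser two-sentence proof (Euclid in $\tilde O(\log r)$, then evaluation of Step~3 in $\tilde O(\log r\log q)$). You additionally spell out the one load-bearing fact the paper leaves implicit — that the B\'ezout coefficients $u,v$ returned by the extended Euclidean algorithm satisfy $|u|,|v|=O(r)$, so the exponents in Step~3 carry only $O(\log r)$ bits — which is exactly why the exponentiations cost $\tilde O(\log r\log q)$ rather than $\tilde O(\log^2 q)$.
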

\begin{proof}
The Euclidean algorithm can be executed in $\tilde{O}(\log r)$
and the last step can be evaluated in $\tilde{O}(\log r\log q)$.
The lemma follows.
\end{proof}

In Algorithm~\ref{alg-factor(x^r-beta)},
a common operation is to compute
\[
\GCDrth{g_k(x,y,z^n)}
\]
for some fixed $k>0$,
some fixed $N$ and all $1\leq n\leq N$, 
where $y\in\F_q$ and $z\in\F_q\cup\{x\}$.
We show the required running time below
and then show the running time of Algorithm~\ref{alg-factor(x^r-beta)}.

\begin{lemma}\label{lem-running time (g_k(x,y,z^n), x^r-beta)}
Let $k$ and $N$ be positive integers.
Given $y\in\F_q$, $z\in\F_q\cup\{x\}$ and $\rho$,
it takes 
\[
\tilde{O}((\log k + N)r\log q)
\]
bit operations to compute $\GCDrth{g_k(x,y,z^n)}$
for all $1\leq n\leq N$.
\end{lemma}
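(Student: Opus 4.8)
The plan is to bound the cost in two phases: first precomputing the powers $z^n$ for $1 \le n \le N$ together with the key polynomial pieces, and then performing $N$ gcd computations modulo $x^r - \beta$. The main point is that $g_k(x,y,z^n) = (y-x)^k - z^n(y-\rho x)^k$ as a polynomial reduced modulo $x^r-\beta$, so the two ``hard'' powers $(y-x)^k \bmod (x^r-\beta)$ and $(y-\rho x)^k \bmod (x^r-\beta)$ are computed \emph{once} and then reused for every $n$.

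First I would compute $A(x) \DEF{=} (y-x)^k \bmod (x^r-\beta)$ and $B(x) \DEF{=} (y-\rho x)^k \bmod (x^r-\beta)$ by repeated squaring in the ring $\F_q[x]/(x^r-\beta)$. Each squaring is a multiplication of two degree-$<r$ polynomials followed by reduction mod $x^r-\beta$, which costs $\tilde O(r\log q)$ bit operations by the fast-arithmetic bounds cited in \SECTION{sect-main results}; there are $O(\log k)$ squarings, giving $\tilde O(\log k \cdot r\log q)$ for this phase. In the case $z \in \F_q$ I would also build the list $z, z^2, \ldots, z^N$ by successive multiplications in $\F_q$, costing $\tilde O(N\log q)$; in the case $z = x$, the element $z^n = x^n \bmod (x^r-\beta) = \beta^{\lfloor n/r\rfloor} x^{n \bmod r}$ is written down in $\tilde O(\log q)$ each, again $\tilde O(N\log q)$ total (here I use that $N$ is polynomially bounded, so $\lfloor n/r\rfloor$ has $O(\log q)$ bits).

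Next, for each $n$ I form $g_k(x,y,z^n) \bmod (x^r-\beta) = A(x) - z^n B(x)$: this is one ring multiplication (by $z^n$, a scalar when $z \in \F_q$, or a degree-$<r$ polynomial when $z=x$) and one subtraction, so $\tilde O(r\log q)$ per value of $n$. Then I run the Euclidean algorithm on this degree-$<r$ polynomial against $x^r-\beta$, which is $\tilde O(r\log q)$ by the fast-gcd bound. Summing over the $N$ values of $n$ gives $\tilde O(N r\log q)$ for the second phase. Adding the two phases yields $\tilde O((\log k + N)r\log q)$, as claimed.

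The only delicate point — and the one I would state carefully rather than wave at — is the uniform reduction $x^{k} \bmod (x^r-\beta)$ used implicitly when handling $z = x$ and, more importantly, the bookkeeping that the exponent $k$ and the index $n$ are given in binary, so that ``$\log k$'' and ``$N$'' genuinely measure the work: the repeated-squaring ladder has $O(\log k)$ rungs and there are exactly $N$ gcd calls. Everything else is an application of the fast polynomial arithmetic costs ($\tilde O(d\log q)$ for degree-$d$ multiplication, division, and gcd over $\F_q$) already quoted in the paper, so no new machinery is needed; the argument is essentially an accounting of $O(\log k)$ squarings plus $N$ reductions-and-gcds, each at the $\tilde O(r\log q)$ rate.
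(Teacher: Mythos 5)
Your proof is correct and follows the same approach as the paper's: precompute $(y-x)^k$ and $(y-\rho x)^k$ modulo $x^r-\beta$ once via repeated squaring in $\tilde O(r\log k\log q)$, then form $A(x)-z^nB(x)$ and take its gcd with $x^r-\beta$ incrementally for each of the $N$ values of $n$ at cost $\tilde O(r\log q)$ each. You supply somewhat more explicit bookkeeping than the paper (notably the reduction of $x^n$ modulo $x^r-\beta$ in the $z=x$ case), but the decomposition and the bounds are the same.
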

\begin{proof}
For any $a,b\in\F_q$,
the power-modulo $(a-bx)^k\pmod{x^r-\beta}$ can be computed in
$\tilde{O}(r\log k\log q)$.
Let
\begin{eqnarray*}
f_1(x) &\DEF{=}& (y-x)^k \pmod{x^r-\beta}; \\
f_2(x) &\DEF{=}& (y-\rho x)^k \pmod{x^r-\beta}.
\end{eqnarray*}
By equation~(\ref{eqn-g_k(x,y,z)}),
\begin{eqnarray*}
g_k(x,y,z^n) &\equiv& f_1(x)-z^nf_2(x)\pmod{x^r-\beta}.
\end{eqnarray*}
Once $f_1$ and $f_2$ are obtained,
the GCDs $\GCDrth{g_k(x,y,z^n)}$ for $1\leq n\leq N$
can be computed incrementally using $\tilde{O}(Nr\log q)$.
The lemma follows.
\end{proof}

Recall that $r_1=r$ by assumption (Q)
and $z_i$ is defined in equation~(\ref{eqn-z_j}).

\begin{lemma}\label{lem-running time factor(x^r-beta)}
Algorithm~\ref{alg-factor(x^r-beta)} is correct and runs in
\begin{equation}\label{eqn-factor(x^r-beta) running time}
\tilde{O}\left(
Z_{\max}
+ \left(r(r+t) + r_{\max} + m\log q\right)r\log q
\right)
\end{equation}
bit operations,
where 
\begin{eqnarray*}
r_{\max} &=& \max(r_1,\ldots,r_m), \\
Z_{\max} &=& \max(Z_{z_1},\ldots,Z_{z_m}),
\end{eqnarray*}
where $Z_n$ is the time required
for constructing a primitive $n$-th root of unity over $\F_q$.
\end{lemma}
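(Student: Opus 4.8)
The plan is to establish the two assertions of Lemma~\ref{lem-running time factor(x^r-beta)} --- correctness and the running time bound --- separately. Correctness is essentially bookkeeping: Algorithm~\ref{alg-factor(x^r-beta)} is a straight-line composition of Steps~I through~V, and Lemmas~\ref{lem-find a}, \ref{lem-find ell}, \ref{lem-find k_0} together with Lemmas~\ref{lem-split x^r-beta V.1}, \ref{lem-split x^r-beta V.2}, \ref{lem-split x^r-beta V.3} already certify that each step either returns a genuine non-trivial factor of $x^r-\beta$ or hands the next step an input satisfying the hypotheses it requires (conditions (A), (L), (K)). The one subtlety is the side condition in Algorithm~\ref{alg-split x^r-beta} that $r\neq2$ when $\beta^r=1$; I would point out that Step~I of Algorithm~\ref{alg-factor(x^r-beta)} intercepts exactly the cases $r=2,\ \beta=\pm1$ (note $\beta^2=1$ iff $\beta=\pm1$), so by the time Step~V is reached that hypothesis holds. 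I would also note that the recursion in Step~V.2 (computing $\zeta_{r^2}$ via Algorithm~\ref{alg-factor(x^r-beta)} applied to $\beta=\rho=\zeta_r$) terminates, because in that recursive call $\beta^r=\zeta_r^r=1$, so it falls into case~V.3 rather than recursing again.

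For the running time, I would walk through the five steps and sum the costs, invoking Lemma~\ref{lem-running time (g_k(x,y,z^n), x^r-beta)} as the workhorse. Step~I costs $\tilde O(Z_r)$ for $\zeta_r$ (plus the negligible $r=2$ special cases, which need at most one square root, i.e.\ $\zeta_4$). Step~II runs Algorithm~\ref{alg-find a} with $k=rt$: it forms $g_{a_i,rt}(x)$ and a gcd with $x^r-\beta$ for each of $k+1=rt+1$ values $a_i$, so by Lemma~\ref{lem-running time (g_k(x,y,z^n), x^r-beta)} (applied with the roles suitably matched, i.e.\ $z\in\F_q$ ranging over the $a_i$, exponent $rt$) this is $\tilde O\big((\log(rt)+rt)\,r\log q\big)=\tilde O\big(r(r+t)\cdot r\log q\big)$ after absorbing logarithmic factors and using $\log(rt)\le rt$. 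Step~III computes $m$ gcds $\GCDrth{g_{a,h_j}(x)}$; each $h_j$ divides $q-1$ so $\log h_j=O(\log q)$, giving $\tilde O(m\cdot r\log q)$, i.e.\ the $m\log q\cdot r\log q$ term. Step~IV computes $D_k(x)$ for $0\le k\le e'$, and since $e'\le e_1\le\log_2 q$ while each exponent $(q-1)/\ell^k$ is $O(\log q)$ bits, this is again $\tilde O(r\log^2 q)$, absorbed into the $m\log q\cdot r\log q$ term. Step~V is the last ingredient: in case $\ell\neq r$ it first constructs $\zeta_\ell$ at cost $\tilde O(Z_\ell)\le\tilde O(Z_{\max})$, then does $\ell-1<r_{\max}$ gcds with a single pair of precomputed power-modulos, costing $\tilde O(r_{\max}\cdot r\log q)$; in case $\ell=r$ it constructs $\zeta_{r^2}$ recursively (one call to the $\beta=\rho$ instance, which by the above lands in case~V.3 and costs no more than a single top-level run without the $\ell=r$ branch, hence is dominated) and then does $\varphi(r^2)<r^2\le r\cdot r_{\max}$ gcds, costing $\tilde O(r^2\cdot r\log q)$, which is absorbed by the $r(r+t)\cdot r\log q$ term. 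Summing all contributions and collecting like terms yields the claimed $\tilde O\big(Z_{\max}+(r(r+t)+r_{\max}+m\log q)\,r\log q\big)$.

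The step I expect to need the most care is bounding the recursion in Step~V.2 cleanly --- I want to argue, without circular reasoning, that the recursive call to Algorithm~\ref{alg-factor(x^r-beta)} with $\beta=\rho$ does \emph{not} itself trigger another level of recursion. The point is that that call has input $\beta=\zeta_r$ with $\beta^r=1$, so after its own Step~III produces some $\ell'$: if $\ell'\neq r$ we use Step~V.1 with no further recursion, and if $\ell'=r$ we use Step~V.3 (legitimate since $r$ is odd here, the $r=2$ sub-case having been removed in Step~I), again with no further recursion. Hence the recursion depth is at most one and the recursive call's cost is bounded by a term already present in~(\ref{eqn-factor(x^r-beta) running time}). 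A secondary bookkeeping point is making sure the partial factorization data in~(Q) --- which must be supplied for the recursive instance --- is available: since $q$ is unchanged, the same $r_1,\ldots,r_m,e_1,\ldots,e_m,t$ serve, so no extra factoring cost is incurred. Everything else is routine: each gcd over $\F_q[x]/(x^r-\beta)$ costs $\tilde O(r\log q)$, each power-modulo with a $O(\log q)$-bit exponent costs $\tilde O(r\log^2 q)$, and all the exponents appearing ($rt$, the $h_j$, the $(q-1)/\ell^k$, and $d$) are either $O(r+t)$ or divisors of $q-1$, so their bit-lengths contribute only logarithmic factors that the $\tilde O$ notation swallows.
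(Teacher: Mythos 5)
Your overall strategy matches the paper's: certify correctness by chaining the correctness lemmas for Steps~I--V, then sum the per-step costs using Lemma~\ref{lem-running time (g_k(x,y,z^n), x^r-beta)} as the workhorse, and handle the recursion in V.2.1 by showing depth one. Your explicit argument that the recursive call (with $\beta=\rho$, so $\beta^r=1$) cannot re-enter V.2 is more careful than the paper's terse ``the recursive call requires~(\ref{eqn-factor(x^r-beta) running time})'' and is a genuine improvement in rigor.

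Two places need repair, though, because the reasoning you give does not actually support the stated bound even though the bound itself is correct. First, you invoke Lemma~\ref{lem-running time (g_k(x,y,z^n), x^r-beta)} for Step~II ``with the roles suitably matched, i.e.\ $z$ ranging over the $a_i$'' --- but the lemma's savings come from fixing $y$ and computing the power-modulos $(y-x)^k,(y-\rho x)^k$ once, then sharing them across the varying $z^n$. In Step~II it is $y=a_i$ that varies, so the two power-modulos must be recomputed for every $a_i$ at cost $\tilde O(r\log k\log q)$ each; the lemma does not apply. The correct count is $(k+1)\cdot\tilde O(r\log k\log q)=\tilde O(kr\log q)=\tilde O(r^2t\log q)$ (absorbing $\log k$), which happens to coincide with your figure, but the route you describe would be wrong in a setting where the $\log k$ factor mattered.

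Second, and more substantively, your Step~IV analysis has a real gap. You claim $\tilde O(r\log^2 q)$ because ``each exponent $(q-1)/\ell^k$ is $O(\log q)$ bits,'' but you are computing $D_k$ for all $0\le k\le e'$ with $e'\le\log_\ell q$, so the naive per-$k$ power-modulos cost $(e'+1)\cdot\tilde O(r\log^2 q)$, which is $\tilde O(r\log^3 q)$ in the worst case (e.g.\ $q-1=2^{e_1}$, $m=1$). That exceeds the $m\log q\cdot r\log q$ term of~(\ref{eqn-factor(x^r-beta) running time}) by a factor of $\log q$, so your stated total is not justified by your argument. The paper avoids this with an incremental trick: compute $(a-x)^{(q-1)/\ell^{e'}}$ and $(a-\rho x)^{(q-1)/\ell^{e'}}$ once in $\tilde O(r\log^2 q)$, then obtain the powers needed for $D_{e'-1},D_{e'-2},\ldots$ by raising the previous intermediate to the $\ell$-th power, each step costing only $\tilde O(r\log\ell\log q)$; since $e'\log\ell\le\log q$, the whole loop stays within $\tilde O(r\log^2 q)$. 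You need to state this (or an equivalent binary-search on the monotone sequence $D_k$) to make Step~IV fit the claimed bound.

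Everything else is sound and in the spirit of the paper's proof; in particular your observation that Step~I of Algorithm~\ref{alg-factor(x^r-beta)} removes exactly the $r=2,\beta^r=1$ cases that Algorithm~\ref{alg-split x^r-beta} excludes, and that the same partial factorization data in~(Q) can be reused for the recursive call, are points the paper leaves implicit.
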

\begin{proof}
If it returns at Step~I,
the algorithm is obviously correct.
Otherwise, the correctness follows from 
Lemmas~\ref{lem-find a},
\ref{lem-find ell},
\ref{lem-find k_0},
\ref{lem-split x^r-beta V.1},
\ref{lem-split x^r-beta V.2} and
\ref{lem-split x^r-beta V.3}.

We show the running time as follows.
Clearly,
Step~I requires
\[
O(Z_{z_1}).
\]
For each $a_i$,
the running times are $\tilde{O}(\log r\log q)$ in II.1
and $\tilde{O}(r\log k\log q)$ in II.2 and II.3.
Step~II requires 
\begin{eqnarray*}
\tilde{O}(kr\log q) &=& \tilde{O}(r^2t\log q)
\end{eqnarray*}
since there are $k+1$ elements and $k=rt$.
Step~III requires 
\[
\tilde{O}(mr\log^2 q).
\]
By first computing $D_{e'}(x)$ in $\tilde{O}(r\log^2 q)$,
then using the intermediate results 
to compute $D_{e'-1}(x)$ in $\tilde{O}(r\log \ell\log q)$
and so on,
Step~IV requires 
\[
\tilde{O}(r\log \ell\log^2 q).
\]

Suppose $\ell\neq r$ or $\beta^r=1$ in Step~V for the following.
We are either in V.1 or V.3.
V.1.1 requires $Z_\ell$ to compute $\zeta_\ell$.
By Lemma~\ref{lem-running time (g_k(x,y,z^n), x^r-beta)},
V.1.2 and V.3.1 can be done in $\tilde{O}((\log q + \ell)r\log q)$ 
and $\tilde{O}((\log q + r^2)r\log q)$,
respectively.
Step~V without V.2 takes
\[
\tilde{O}(Z_\ell + (r^2 + \ell + \log q)r\log q).
\]
The overall running time of the algorithm in this case is
(\ref{eqn-factor(x^r-beta) running time}).

Suppose $\ell=r\neq 2$ and $\beta^r\neq1$.
Everything remains the same except that we are in V.2.
By Lemma~\ref{lem-running time rth root of beta} and above,
 the recursive call in V.2.1 requires
(\ref{eqn-factor(x^r-beta) running time}).
V.2.2,
which is similar to V.3.1,
requires $\tilde{O}((\log q + r^2)r\log q)$.
The overall running time of the algorithm in this case is also
(\ref{eqn-factor(x^r-beta) running time}).

The lemma follows
\end{proof}

By the running time in (\ref{eqn-factor(x^r-beta) running time}),
Algorithm~\ref{alg-factor(x^r-beta)} is efficient
only if $t$ and all the prime factors of $q-1$ are small
and,
for all $1\leq i\leq m$,
a primitive root $z_i$-th of unity can be constructed efficiently over $\F_q$.

\begin{proof}[Proof of Theorem \ref{thm-r th root}]
If $r^2\nmid (q-1)$,
taking $r$-th roots over $\F_q$ can be easily done in polynomial-time.
Otherwise,
$r^2\mid (q-1)$.
Since $q\in\overline{\cal Q}$,
we have 
\begin{eqnarray*}
t+r_{\max}+Z_{\max} &=& O(\POLY(\log q)).
\end{eqnarray*}
Taking $r$-th roots for any $r$-th residue over $\F_q$ can be done
in polynomial-time by Lemmas \ref{lem-running time rth root of beta}
and \ref{lem-running time factor(x^r-beta)}.

For constructing an $r$-th nonresidue $\zeta_{r^{e_1}}\in\F_q$,
we begin with $\zeta_r$,
compute $\zeta_{r^2}=\sqrt[r]{\zeta_r}$,
then compute $\zeta_{r^3}=\sqrt[r]{\zeta_{r^2}}$ and so on.
The theorem follows.
\end{proof}

\begin{proof}[Proof of Theorem \ref{thm-primitive element}]
For any $q\in{\cal Q}_1$, for each $i$,
an $r_i$-th nonresidue $\zeta_{r_i^{e_i}}\in\F_q$ 
can be computed in deterministic polynomial by Theorem \ref{thm-r th root}.
The product $\prod_{i=1}^m\zeta_{r_i^{e_i}}$ 
is a primitive element over $\F_q$.
The theorem follows.
\end{proof}

We show an interesting special case below.
\begin{theorem}\label{thm-rth root, q=r^e t+1}
Let $q=r^e t+1$ be a prime power
for $r$ prime, $e>1$, $t\geq 1$ and $(r,t)=1$.
There is a deterministic algorithm,
which runs in
\[
\tilde{O}((r(r+t)+\log q)r\log q)
\]
bit operations for taking $r$-th root over $\F_q$.

Further,
there is a deterministic algorithm,
which runs in
\[
\tilde{O}((r(r+t)+\log q)r\log^2 q)
\]
bit operations for constructing an $r$-th nonresidue over $\F_q$.
\end{theorem}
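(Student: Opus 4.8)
The plan is to read the theorem off from the running-time analysis already carried out for Algorithm~\ref{alg-factor(x^r-beta)}. Since $q-1=r^e t$ with $(r,t)=1$, in the notation of assumption~(Q) I would take $m=1$, $r_1=r$, $e_1=e$, and coprime cofactor $t$; the hypothesis $e>1$ forces $r^2\mid q-1$, so assumption~(\ref{eqn-r^2|(q-1)}) is met and Algorithm~\ref{alg-factor(x^r-beta)} applies with $r_{\max}=r$ and $m=1$. Substituting these values into the bound~(\ref{eqn-factor(x^r-beta) running time}) of Lemma~\ref{lem-running time factor(x^r-beta)} and absorbing the lone $r$ into $r(r+t)$, a non-trivial factor of $x^r-\beta$ is produced in $\tilde{O}(Z_{z_1}+(r(r+t)+\log q)r\log q)$ bit operations, where $z_1=r$ for odd $r$ and $z_1=4$ for $r=2$. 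Passing this factor to Algorithm~\ref{alg-rth root of beta} costs only a further $\tilde{O}(\log r\log q)$ by Lemmas~\ref{lem-non-trivial factor <=> r th root} and~\ref{lem-running time rth root of beta}, which is dominated by the previous term. So the whole question comes down to bounding $Z_{z_1}$.

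To bound $Z_{z_1}$ I would build a primitive $z_1$-th root of unity directly, not via the $r$-th root routine (which would be circular when $r=2$). Write $z_1=r^c$ with $c\in\{1,2\}$; since $e>1$ we have $c\le e$. I would enumerate $rt+1$ distinct nonzero elements $g_1,\ldots,g_{rt+1}\in\F_q$ and compute $g_i^{t}$ for each, in $\tilde{O}(rt\log q)$ bit operations. The cyclic subgroup of $\F_q^\times$ of order $r^{c-1}t$ exists (as $c\le e$) and has at most $rt$ elements, so some $h=g_i^{t}$ lies outside it; because $(r,t)=1$, the order of $h$ is then $r^j$ for some $c\le j\le e$. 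Replacing $h$ by $h^{r}$ repeatedly while $h^{z_1}\neq 1$ terminates after at most $e$ steps with an element of order exactly $z_1$, at a further cost of $\tilde{O}(e\log r\log q)=\tilde{O}(\log^2 q)$ since $r^e<q$. Hence $Z_{z_1}=\tilde{O}(rt\log q+\log^2 q)$, which is dominated by $(r(r+t)+\log q)r\log q$, and the first running-time bound follows.

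For the second assertion, note that any element of order $r^e$ is an $r$-th nonresidue, since $r^e\nmid(q-1)/r=r^{e-1}t$. I would build such an element by repeated $r$-th root extraction, starting from an element of order $r$ (for odd $r$, the $\zeta_r$ of cost $\tilde{O}(Z_{z_1})$ from the previous paragraph; for $r=2$, the directly built $\zeta_4$, continuing the chain above $-1$), and iterating until an element of order $r^e$ is reached. At most $e-1$ extractions are needed, each applied to an $r$-th residue (its order divides $(q-1)/r$) and hence costing $\tilde{O}((r(r+t)+\log q)r\log q)$ by the first part; since $e-1\le\log q$, the total is $\tilde{O}((r(r+t)+\log q)r\log^2 q)$, as claimed.

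I do not expect a genuine obstacle here: the statement is essentially an instantiation of Lemma~\ref{lem-running time factor(x^r-beta)} plus two short direct estimates. The only mildly delicate points are checking that the auxiliary cost $Z_{z_1}$ really fits inside the stated budget, and noticing that for $r=2$ the required primitive fourth root of unity must come from the elementary construction of the second paragraph rather than from the square-root routine (which would loop back on itself); both are handled by the arguments sketched above.
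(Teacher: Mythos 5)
Your proof is correct and follows the same structure as the paper's: instantiate the running-time bound of Lemma~\ref{lem-running time factor(x^r-beta)} with $m=1$, feed the non-trivial factor to Algorithm~\ref{alg-rth root of beta}, and obtain the nonresidue by a chain of $O(\log q)$ root extractions. The only real divergence is in how $\zeta_{z_1}$ is produced: the paper simply cites \cite[Alg.~5.9]{stw2011sqrt} for $\zeta_r$ at cost $\tilde{O}((t+\log q)\log q)$, whereas you give a self-contained construction (enumerate $rt+1$ elements, take $t$-th powers, land outside the subgroup of order $r^{c-1}t$, then power down to order exactly $z_1$). Your version is slightly more explicit in two useful ways that the paper glosses over: it constructs $\zeta_4$ rather than just $\zeta_2=-1$ when $r=2$, which is what Step~I of Algorithm~\ref{alg-factor(x^r-beta)} actually needs, and it explains why this step is not circular for $r=2$. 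Both routes fit inside the stated budget, so the two proofs are equivalent modulo this internalization of the cited root-of-unity subroutine.
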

\begin{proof}
Firstly,
find a primitive $r$-th root of unity, $\zeta_r$,
by \cite[Alg.~5.9]{stw2011sqrt} in $\tilde{O}((t+\log q)\log q)$.
Then,
use Algorithms~\ref{alg-rth root of beta} and \ref{alg-factor(x^r-beta)}
to compute an $r$-th root
in $\tilde{O}(\log r\log q)$ and $\tilde{O}((r(r+t)+\log q)r\log q)$,
respectively.
For constructing an $r$-th nonresidue,
it requires taking $O(\log q)$ $r$-th roots.
The theorem follows.
\end{proof}
\section{Primality Testing}\label{sect-primality testing}
Let $N$ be a generalized Proth number defined
in Definition~\ref{def-generalized proth}.
Consider the problem of deciding the primality of $N$.
In \cite{stw2011proth},
a deterministic primality test is created
from a deterministic square root algorithm and Proth's theorem;
see \cite{Williams1998} for the details of Proth's theorem.
The idea is generalized -- 
we design a deterministic primality test
using the deterministic $r$-th root algorithm presented
in \SECTION{sect-r th root}
and a generalized Proth's theorem (Theorem \ref{thm-generalized proth} below).
This generalization of Proth's theorem is well known.
The idea of our primality test is similar to Pocklington-Lehmer primality test;
see \cite[\S 7.2]{lcw2008}.
Theorem \ref{thm-r th root primality testing} is proved in the following.

\begin{proof}[Proof of Theorem \ref{thm-r th root primality testing}]
If $N$ is prime,
an $r$-th nonresidue $\zeta_{r^e}\in\Z/N\Z$
can be constructed in 
\[
\tilde{O}((r(r+t)+\log N)r\log^2 N)
\]
by Theorem~\ref{thm-rth root, q=r^e t+1}.
If $N$ is composite,
$\zeta_{r^e}\not\in\Z/N\Z$ 
by Theorem \ref{thm-generalized proth} below.
Since all algorithms, including Algorithm~5.9 in \cite{stw2011sqrt},
Algorithms~\ref{alg-rth root of beta} and \ref{alg-factor(x^r-beta)}
in the previous section,
are deterministic,
the primality of $N$ can be decided
by trying constructing an $r$-th nonresidue over the integer ring $\Z/N\Z$
using these algorithms.
The theorem follows.
\end{proof}

For $N=r^e t+1$ with $r$ a small constant and $t=\tilde{O}(\log N)$,
the running time of our primality test is 
\[
\tilde{O}(\log^3 N).
\]
It is faster than all known deterministic tests.
The running time of the AKS test \cite{Agrawal2004} 
and Lenstra-Pomerance's modified AKS test \cite{Lenstra2009}
are $\tilde{O}(\log^{7.5} N)$ and $\tilde{O}(\log^6 N)$, 
respectively.
Assuming the Extended Riemann Hypothesis,
Miller's test \cite{Miller1975} is deterministic 
with running time $\tilde{O}(\log^4N)$.

We will use the following lemma to prove Theorem \ref{thm-generalized proth}.
Denote Euler's function by $\phi(\;\cdot\;)$.

\begin{lemma}\label{lem-r^e|phi(l^k) => k=1}
Let $n=\ell^k$ be a prime power for some prime $\ell$ and $k\geq 1$.
Let $r^e$ be a prime power with $r\neq\ell$.
If 
\[
r^e\mid\phi(n)
\qquad\text{and}\qquad
r^e>\sqrt{n},
\]
then $k=1$ and $n$ is a prime.
\end{lemma}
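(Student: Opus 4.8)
The plan is to use the formula $\phi(n)=\phi(\ell^k)=\ell^{k-1}(\ell-1)$ and argue that if $k\geq 2$ then $r^e$ cannot simultaneously divide $\phi(n)$ and exceed $\sqrt{n}$. Suppose for contradiction that $k\geq 2$. Since $r\neq\ell$, the prime $r$ does not divide $\ell^{k-1}$, so the condition $r^e\mid\ell^{k-1}(\ell-1)$ forces $r^e\mid\ell-1$; in particular $r^e\leq\ell-1<\ell$. On the other hand the hypothesis $r^e>\sqrt{n}=\ell^{k/2}$ gives $\ell^{k/2}<\ell$, i.e.\ $k/2<1$, contradicting $k\geq 2$. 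Hence $k=1$, so $n=\ell$ is prime, as claimed.

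The argument is essentially a one-line inequality chase, so there is no serious obstacle; the only points that need care are (i) invoking $r\neq\ell$ to strip off the $\ell^{k-1}$ factor — this is where the coprimality hypothesis is used — and (ii) being slightly careful with the edge case $k=1$, where $\ell^{k-1}=1$ and the factorization of $\phi(n)$ degenerates to $\ell-1$, but in that case there is nothing to prove since the conclusion is exactly $k=1$ and $n$ prime. One should also note that no primality hypothesis on $n$ beyond ``$n$ is a prime power'' is needed: the statement ``$n$ is a prime'' in the conclusion follows automatically once $k=1$ is established, since $n=\ell^1=\ell$.

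I would write the proof in three short sentences: reduce to showing $k=1$; observe $\gcd(r,\ell)=1 \Rightarrow r^e\mid \ell-1 \Rightarrow r^e<\ell$; then combine with $r^e>\sqrt n=\ell^{k/2}$ to get $\ell^{k/2}<\ell$, forcing $k<2$. Finally conclude $n=\ell$ is prime.
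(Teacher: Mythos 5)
Your proof is correct and takes essentially the same approach as the paper: both extract $r^e \mid \ell-1$ from $\gcd(r,\ell)=1$, giving $r^e < \ell$, and combine this with $r^e > \sqrt{n}$ to rule out $k\geq 2$. The only cosmetic difference is that the paper phrases the final contradiction as $\phi(n)\geq(\ell-1)\ell > r^{2e} > n$ (impossible since $\phi(n)<n$), whereas you derive $\ell^{k/2}<\ell$ directly, which is arguably a hair more streamlined.
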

\begin{proof}
We have
\begin{eqnarray*}
\phi(n) &=& (\ell-1)\ell^{k-1}.
\end{eqnarray*}
Then,
$r^e$ divides $(\ell-1)$ and so $\ell>r^e$.
If $k>1$,
then 
\[
\phi(n)\geq(\ell-1)\ell>r^{2e}>n,
\]
which is a contradiction.
Thus,
$k=1$ and $n$ is a prime.
\end{proof}

\begin{theorem}\label{thm-generalized proth}
{\bf (Generalized Proth's Theorem)}
Let $N=r^e t+1$ be a generalized Proth number defined
in Definition~\ref{def-generalized proth}
for prime $r$,
positive integers $e$ and $t$
such that $r^e>t$.
If
\begin{equation}\label{eqn-generalized proth}
a^{N-1}\equiv1\pmod{N}
\qquad\text{and}\qquad
a^{(N-1)/r}\not\equiv 1\pmod{N},
\end{equation}
for some integer $a$,
then $N$ is a prime.
\end{theorem}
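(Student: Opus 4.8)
The plan is to adapt the classical Proth/Pocklington argument. Condition~(\ref{eqn-generalized proth}) says that $a$ has a well-defined order in $(\Z/N\Z)^\times$ (since $a^{N-1}\equiv1$, $a$ is a unit), and I would let $d=\ORD_N a$ be that order. From $a^{N-1}\equiv1$ we get $d\mid N-1=r^e t$, and from $a^{(N-1)/r}\not\equiv1$ we get $d\nmid(N-1)/r$. Writing $d=r^s u$ with $u\mid t$ and $0\le s\le e$, the second condition forces $s=e$, so $r^e\mid d$. Hence $r^e$ divides $\ORD_N a$.

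Next I would pass to a prime power divisor of $N$. Suppose $N$ is not prime, and let $\ell$ be any prime factor of $N$; write $\ell^k\,\|\,N$ with $k\ge1$. Reducing $a$ modulo $\ell^k$, its order there still divides its order modulo $N$ but more importantly divides $\phi(\ell^k)$, and since $r^e\mid\ORD_N a$ and the order mod $N$ is the lcm of the orders mod each prime-power factor, $r^e$ must divide $\ORD_{\ell^j}a$ for some prime power $\ell^j\,\|\,N$; in particular $r^e\mid\phi(\ell^j)$. Here I need $r\neq\ell$: this holds because $r\mid N-1$, so $r\nmid N$, hence $r\neq\ell$. Now I invoke Lemma~\ref{lem-r^e|phi(l^k) => k=1}: I have $r^e\mid\phi(\ell^j)$, and the generalized Proth hypothesis $r^e>t$ together with $N=r^e t+1>\ell^j$ (since $\ell^j$ is a proper prime-power divisor, or even $\ell^j\le N$) gives $r^e>\sqrt{\ell^j}$ once we note $r^{2e}=r^e\cdot r^e>t\cdot r^e>t r^e=N-1\ge\ell^j-1$, i.e. $r^{2e}\ge\ell^j$. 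The lemma then yields $j=1$ and $\ell^j=\ell$ is prime --- but then $\ell$ divides $N$ and $r^e\mid\ell-1$ forces $\ell>r^e$, while $\ell\mid N=r^e t+1$ and $\ell>r^e\ge t+1>t$ means $N=\ell$ (the only multiple of $\ell$ in the range $(r^e,N]$ that is $\equiv1$-ish), contradicting that $N$ is composite, or more cleanly: $\ell>r^e\ge\sqrt N$ so $N$ has no second prime factor and $N=\ell$ is prime.

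The main obstacle is getting the inequality bookkeeping exactly right so that Lemma~\ref{lem-r^e|phi(l^k) => k=1} applies to the correct prime-power factor. The subtle point is that $r^e\mid\ORD_N a$ does not immediately give $r^e\mid\phi(\ell^k)$ for the \emph{maximal} power $\ell^k\|N$; one only gets it for some $\ell^j$ dividing $N$ that contributes the $r$-part of the order via the CRT decomposition $(\Z/N\Z)^\times\cong\prod(\Z/\ell_i^{k_i}\Z)^\times$. I would phrase this carefully: since $r^e$ divides $\ORD_N a=\LCM_i\ORD_{\ell_i^{k_i}}a$ and $r$ is prime, $r^e$ divides $\ORD_{\ell_i^{k_i}}a$ for some $i$, hence $r^e\mid\phi(\ell_i^{k_i})$. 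Then $\ell_i^{k_i}\le N$ and $r^{2e}>t r^e=N-1\ge\ell_i^{k_i}-1$ give $r^e>\sqrt{\ell_i^{k_i}}$, Lemma~\ref{lem-r^e|phi(l^k) => k=1} gives $k_i=1$ and $\ell_i$ prime with $\ell_i>r^e\ge\sqrt N$, so $N/\ell_i<\sqrt N<\ell_i$ has no prime factor $\ge\ell_i$ and in fact $N/\ell_i=1$, i.e.\ $N=\ell_i$ is prime. The rest is routine.
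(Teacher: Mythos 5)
Your first two stages match the paper's proof in substance: extract $r^e\mid\ORD_N a$ from the two conditions, then use the CRT decomposition $(\Z/N\Z)^\times\cong\prod_i(\Z/\ell_i^{k_i}\Z)^\times$ to locate a prime power $\ell_i^{k_i}\,\|\,N$ with $r^e\mid\ORD_{\ell_i^{k_i}}a$, hence $r^e\mid\phi(\ell_i^{k_i})$. (The paper phrases this via the element $b=a^{d/r^e}$ of exact order $r^e$ and a maximal exponent $s_1$, whereas you argue via the lcm and primality of $r$; these are equivalent.) Your observation that $r\neq\ell_i$ follows from $r\mid N-1$ is correct and needed, and the inequality $r^{2e}>N-1$ giving $r^{2e}\ge\ell_i^{k_i}$, upgraded to strict since $r\neq\ell_i$, is fine. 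Lemma~\ref{lem-r^e|phi(l^k) => k=1} then gives $k_i=1$, $\ell_i$ prime, and $\ell_i>r^e$.

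The gap is in the last step. From $\ell_i>r^e\ge\sqrt N$ you conclude ``$N/\ell_i<\sqrt N<\ell_i$ has no prime factor $\ge\ell_i$ and in fact $N/\ell_i=1$,'' but the ``in fact'' does not follow: a composite $N$ can certainly factor as $N=\ell_i\cdot M$ with $1<M<\sqrt N$ (any semiprime with one large prime factor does this), so the inequality alone does not rule out $M>1$. Your earlier informal gesture --- ``the only multiple of $\ell$ in the range $(r^e,N]$ that is $\equiv1$-ish'' --- was actually pointing at the missing ingredient, but you dropped it in the ``careful'' version. What is needed is the congruence argument: since $\ell_i\equiv1\pmod{r^e}$ and $N\equiv1\pmod{r^e}$, the cofactor $M=N/\ell_i$ satisfies $\ell_iM\equiv M\equiv1\pmod{r^e}$, so either $M=1$ or $M\ge r^e+1$. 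In the latter case $N=\ell_iM\ge(r^e+1)^2>r^{2e}\ge N$, a contradiction, so $M=1$ and $N=\ell_i$ is prime. This is exactly what the paper's computation with $\ell_1=r^et_1+1$, $N/\ell_1=r^{e_0}t_0+1$, and $e_0\ge e$ accomplishes; you should either reproduce that computation or supply the congruence step above.
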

\begin{proof}
It is easy to see that, for any generalized Proth number,
\begin{eqnarray*}
r^e &>& \sqrt{N}.
\end{eqnarray*}
Suppose there exists an integer $a$ 
satisfying equations~(\ref{eqn-generalized proth}).
Let $d\DEF{=}\ORD_N a$ be the order of $a$ in $(\Z/N\Z)^\times$.
Then,
$r^e$ divides $d$
and so 
\[
r^e\mid\phi(N).
\]
If $N=\ell^k$ for some prime $\ell$ and $k\geq 1$,
then $N$ is a prime
by Lemma \ref{lem-r^e|phi(l^k) => k=1}.

Suppose $N=\ell_1^{k_1}\cdots\ell_m^{k_m}$
for $m>1$,
some distinct primes $\ell_1,\ldots,\ell_m$
and some integers $k_1,\ldots,k_m\geq 1$.
Let $b\equiv a^{d/r^e}\pmod{N}$.
Then,
\[
\ORD_N b=r^e.
\]
Let $d_i$ be the order of $b$ in $(\Z/\ell_i^{k_i}\Z)^\times$.
Since
\begin{eqnarray*}
b^{r^e} &\equiv& 1\pmod{\ell_i^{k_i}}
\end{eqnarray*}
for all $1\leq i\leq m$,
\[
d_i \;\DEF{=}\; \ORD_{\ell_i^{k_i}} b \;=\; r^{s_i}
\]
for some $0\leq s_i\leq e$.
Without loss of generality,
assume $s_1\geq s_i$ for all $1\leq i\leq m$.
Then,
\begin{eqnarray*}
b^{d_1} &\equiv& 1\pmod{\ell_i^{k_i}}
\end{eqnarray*}
for all $1\leq i\leq m$.
By the Chinese Remainder Theorem,
\begin{eqnarray*}
b^{d_1} &\equiv& 1\pmod{N}.
\end{eqnarray*}
Therefore,
$r^e$ divides both $d_1$ and $\phi(\ell_1^{k_1})$.
By Lemma \ref{lem-r^e|phi(l^k) => k=1} with $n=\ell_1^{k_1}$,
we have $k_1=1$.
Write 
\begin{eqnarray*}
\ell_1=r^et_1+1
\qquad\text{and}\qquad
N/\ell_1 &=& r^{e_0}t_0+1
\end{eqnarray*}
with $(r,t_0)=1$.
Since $\ell_1(N/\ell_1)=N=r^e t+1$,
we have
\begin{eqnarray*}
t &=& t_0t_1r^{e_0}+t_1+t_0r^{e_0-e}.
\end{eqnarray*}
Then,
$e_0\geq e$,
otherwise,
$t$ is not an integer.
However,
\[
N
\;\;=\;\; \ell_1(N/\ell_1)
\;\;>\;\; r^{e+e_0}
\;\;\geq\;\; r^{2e}
\;\;>\;\; N.
\]
which is a contradiction.
The theorem follows.
\end{proof}
\section{Solving Polynomial Equations}
\label{sect-solving polynomial equations}
Let $\F_q$ be the finite field of $q$ elements.
Let $f(x)\in\F_q[x]$ be a polynomial.
In this section,
we consider the problem of solving the polynomial equation 
\[
f(x)=0,
\]
over $\F_q$.
By (\ref{eqn-squarefree factorization})
and (\ref{eqn-distinct degree factorization})
in \SECTION{sect-introduction},
we may assume $f$ is a product of distinct linear factors.
Without loss of generality,
assume $\deg f>1$ and $f(0)\neq0$.
When the prime factors of $q-1$ are small,
the problem of solving polynomial equations over $\F_q$
is polynomial-time reducible to 
the problem of taking $r$-th roots over $\F_q$
for all prime factors $r$ of $q-1$.

The idea is simple:
suppose $f(x)$ is a divisor of $x^d-a$
for some integer divisor $d$ of $q-1$
and some $d$-th residue $a\in\F_q$
with 
\[
\ORD(a)=(q-1)/d.
\]
Let $\ell$ be a prime factor of $d$
and $\zeta_\ell\in\F_q$ be a primitive $\ell$-th root of unity.
For $0\leq i<\ell$,
let
\begin{eqnarray*}
h_i(x) &\DEF{=}& x^{d/\ell}-\zeta_\ell^ia^{1/\ell}\in\F_q[x]; \\
g_i(x) &\DEF{=}& \GCD{f(x)}{h_i(x)}\in\F_q[x].
\end{eqnarray*}
We have
\begin{eqnarray*}
x^d-a &=& \prod_{i=0}^{\ell-1}h_i(x); \\
f(x) &=& \prod_{i=0}^{\ell-1}g_i(x).
\end{eqnarray*}
If $g_i$
is a non-trivial factor of $f$
for some $0\leq i<\ell$,
we are done
(or keep factoring until the complete factorization of $f$ is obtained.)
Otherwise,
$f$ is a divisor of $h_{i_0}$
for some $0\leq i_0<\ell$.
Repeat the process with $d'=d/\ell$ and $a'=\zeta_\ell^ia^{1/\ell}$.
Initially,
$f(x)$ is a divisor or $x^{q-1}-1$,
i.e.~$a=1$ and $d=q-1$.
We show a deterministic algorithm 
to find a non-trivial factor of $f$ below.

\begin{algorithm}[Factoring products of linear polynomials]
\label{alg-factoring products of linear polynomials}
The inputs are the prime factorization $q-1=r_1^{e_1}\cdots r_m^{e_m}$
and a polynomial $f(x)\in\F_q[x]$
such that $f(0)\neq 0$
and $f(x)$ is a product of two or more distinct monic linear polynomials.
This algorithm returns a non-trivial factor of $f$.
\end{algorithm}
\begin{enumerate}
\item[I:]
Set $a=1$ and $d=q-1$. \\
Compute $\zeta_{r_j}$ for $1\leq j\leq m$.
\item[II:]
For each $1\leq j\leq m$:
\begin{enumerate}
\item[II.1:]
For each $1\leq k\leq e_j$:
\begin{enumerate}
\item[II.1.1:]
Compute $b\in\F_q$ such that $b^{r_j}=a$ using some algorithm.
\item[II.1.2:]
Compute $g_i(x)=\GCD{f(x)}{x^{d/r_j}-\zeta_{r_j}^i b}$ for all $0\leq i<r_j$.
\item[II.1.3:]
If $g_i$ is a non-trivial factor of $f$ for some $0\leq i<r_j$, \\
return $g_i$. \\
Otherwise,
set $i_0=i$ such that $g_i=f$.
\item[II.1.4:]
Set $a=\zeta_{r_j}^{i_0}b$ and $d=d/r_j$.
\end{enumerate}
\end{enumerate}
\end{enumerate}

\begin{lemma}\label{lem-factoring products of linear polynomials}
Algorithm \ref{alg-factoring products of linear polynomials} is correct.
\end{lemma}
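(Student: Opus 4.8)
The plan is to establish the loop invariant that at the start of every pass through II.1, with the current values of $a$ and $d$, the polynomial $f(x)$ divides $x^{d}-a$ in $\F_q[x]$ (in particular $a\neq0$, since $f(0)\neq0$). This holds at the first pass because $a=1$, $d=q-1$, and every root of $f$ lies in $\F_q^{\times}$ (as $f(0)\neq0$ and $f$ is a product of monic linear polynomials over $\F_q$), so $f\mid x^{q-1}-1$.

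For the inductive step, write $\ell=r_j$ for the current outer index. First I would check that II.1.1 is well posed: from the loop structure, when the pass for $(j,k)$ begins we have $d=(q-1)/\bigl(r_1^{e_1}\cdots r_{j-1}^{e_{j-1}}r_j^{\,k-1}\bigr)$, so $\ell\mid d$ (because $k-1<e_j$); and if $\alpha\in\F_q$ is any root of $f$ then $\alpha^{d}=a$, so $a=(\alpha^{d/\ell})^{\ell}$ is an $\ell$-th residue and the $\ell$-th root $b$ computed in II.1.1 exists. Since $\ell\nmid q$ and $a\neq0$, the polynomial $Y^{\ell}-a$ is separable with the $\ell$ distinct roots $\zeta_\ell^{i}b$, $0\le i<\ell$, so substituting $Y=x^{d/\ell}$ gives
\[
x^{d}-a=\prod_{i=0}^{\ell-1}\bigl(x^{d/\ell}-\zeta_\ell^{i}b\bigr)=\prod_{i=0}^{\ell-1}h_i(x),
\]
and the $h_i$ are pairwise coprime. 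Moreover $x^{d}-a$ is squarefree: $d\mid q-1$ makes $d$ a nonzero scalar in $\F_q$, so the derivative of $x^{d}-a$ is a unit times $x^{d-1}$, which is coprime to $x^{d}-a$ as $a\neq0$. Hence from $f\mid x^{d}-a$ we obtain $f=\prod_{i}\GCD{f}{h_i}=\prod_{i}g_i$ in $\F_q[x]$.

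Now I would split exactly as the algorithm does. If some $g_i$ is a non-trivial factor of $f$, it is returned in II.1.3 and we are finished. Otherwise every $g_i$ is $1$ or $f$, and since $\deg f\ge2$ while $\prod_i g_i=f$, exactly one index $i_0$ satisfies $g_{i_0}=f$; thus $f\mid h_{i_0}=x^{d/\ell}-\zeta_\ell^{i_0}b$. The update in II.1.4 then sets $a\leftarrow\zeta_\ell^{i_0}b$ and $d\leftarrow d/\ell$, which is exactly the assertion $f\mid x^{d}-a$ for the new values, so the invariant is restored. For termination, note that over the whole double loop $d$ is divided by $r_j$ exactly $e_j$ times for each $j$, so if the algorithm never returns at II.1.3 it finishes with $d=1$, and then the invariant forces $f\mid x-a$, contradicting $\deg f\ge2$. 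Therefore the algorithm must output a non-trivial factor of $f$ at some execution of II.1.3, which is the claim.

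The only mildly delicate points are the index bookkeeping that guarantees $a$ is an $\ell$-th residue whenever II.1.1 is invoked (so that ``compute $b$ with $b^{r_j}=a$'' is meaningful) and the separability of $x^{d}-a$, which is what legitimizes the identity $f=\prod_i g_i$; beyond stating the invariant carefully so that it carries $a\neq0$ and $f\mid x^{d}-a$ together, I do not expect a real obstacle.
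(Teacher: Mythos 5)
Your proof is correct and follows essentially the same route as the paper's: establish the loop invariant $f(x)\mid x^{d}-a$ together with well-definedness of the $r_j$-th root in II.1.1, split $x^{d}-a=\prod_i h_i$ to obtain $f=\prod_i g_i$, and conclude by contradiction when $d=1$. You add some useful explicit justifications (separability of $x^{d}-a$, pairwise coprimality of the $h_i$, uniqueness of $i_0$) that the paper states more tersely, but the underlying argument is the same.
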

\begin{proof}
Clearly,
the loops maintain an invariant that
$a$ is an $r_j$-th residue over $\F_q$ at II.1.1.
Thus,
the $r_j$-th roots of $a$ are in $\F_q$.

We show by induction that 
$f(x)\mid(x^d-a)$ is an invariant at II.1.1.
When $j=k=1$,
we have $a=1$ and $d=q-1$.
By the input assumption,
$f(x)$ divides $x^{q-1}-1$.
Let $a_{j_0,k_0}$ and $d_{j_0,k_0}$ be the values of $a$ and $d$ at II.1.1
when $j=j_0$ and $k=k_0$.
Suppose $f(x)$ divides $x^{d_{j_0,k_0}}-a_{j_0,k_0}$.
Let
\begin{eqnarray*}
b_{j_0,k_0} &\DEF{=}& a_{j_0,k_0}^{1/r_{j_0}}\in\F_q; \\
h_{i,j_0,k_0}(x)
&\DEF{=}& x^{d_{j_0,k_0}/r_{j_0}}-\zeta_{r_{j_0}}^ib_{j_0,k_0}\in\F_q[x]; \\
g_{i,j_0,k_0}(x)
&\DEF{=}& \GCD{f(x)}{h_{i,j_0,k_0}(x)}\in\F_q[x].
\end{eqnarray*}
Then,
\begin{eqnarray*}
x^{d_{j_0,k_0}}-a_{j_0,k_0}
&=& \prod_{i=0}^{r_{j_0}-1} h_{i,j_0,k_0}(x); \\
f(x) &=& \prod_{i=0}^{r_{j_0}-1}g_{i,j_0,k_0}(x).
\end{eqnarray*}
If there exists $g_i$ a non-trivial factor of $f$, done.
Otherwise,
there exists a unique $i_0$ such that $g_{i_0}=f$.
Denote the pair of $j,k$ following $j_0,k_0$ by $j_1,k_1$.
When $j=j_1$ and $k=k_1$,
we have
\[
a=a_{j_1,k_1}=\zeta_{r_{j_0}}^{i_0}b_{j_0,k_0}
\qquad\text{and}\qquad
d=d_{j_1,k_1}=d_{j_0,k_0}/r_{j_0}
\] at II.1.1.
By the definition of $g_{i_0}$,
$f(x)$ divides $x^d-a$.

As a consequence,
$f(x)$ divides $x^d-a$ right after II.1.4.
The algorithm eventually returns a non-trivial factor of $f$ at II.1.3. 
Otherwise,
for $j=m$ and $k=e_m$,
we have $d=1$ right after II.1.4.
Then,
$f(x)$ divides a linear polynomial.
It is a contradiction.
The lemma follows.
\end{proof}

\begin{lemma}\label{lem-running time: factoring products of linear polynomials}
Algorithm \ref{alg-factoring products of linear polynomials} runs in
\[
\tilde{O}((Z_{\max} + R_{\max} + (r_{\max}+\log q)\deg f\log q)\log q)
\]
bit operations,
where 
\begin{eqnarray*}
r_{\max} &\DEF{=}& \max(r_1,\ldots,r_m), \\
Z_{\max} &\DEF{=}& \max(Z_{r_1},\ldots,Z_{r_m}), \\
R_{\max} &\DEF{=}& \max(R_{r_1},\ldots,R_{r_m}),
\end{eqnarray*}
where $Z_n$ and $R_n$ are respectively the time required
for constructing a primitive $n$-th root of unity
and computing an $n$-th root over $\F_q$.
\end{lemma}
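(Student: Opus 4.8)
The plan is to bound the running time by walking through Algorithm~\ref{alg-factoring products of linear polynomials} step by step and summing the costs of the operations, using the $\tilde{O}(d\log q)$ bound for polynomial arithmetic over $\F_q$ stated at the start of \SECTION{sect-main results}. First I would account for Step~I: computing the $m$ primitive roots of unity $\zeta_{r_1},\ldots,\zeta_{r_m}$ costs $O(Z_{r_1}+\cdots+Z_{r_m})$, which I will bound crudely by $\tilde{O}(m\,Z_{\max})$; since $m=O(\log q)$ (as $q-1$ has at most $O(\log q)$ prime factors), this is $\tilde{O}(Z_{\max}\log q)$.

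Next I would analyze the double loop. The outer loop over $j$ and the inner loop over $k$ together execute $e_1+\cdots+e_m$ times, and since $r_1^{e_1}\cdots r_m^{e_m}=q-1$ we have $e_1+\cdots+e_m=O(\log q)$ iterations total. In each iteration: step~II.1.1 computes an $r_j$-th root of $a$, costing $R_{r_j}\le R_{\max}$; step~II.1.2 computes $r_j$ gcd's $\GCD{f(x)}{x^{d/r_j}-\zeta_{r_j}^i b}$. The key observation for II.1.2 is that one first reduces $x^{d/r_j}\bmod f(x)$ once, by repeated squaring, in $\tilde{O}(\deg f\cdot\log q)$ bit operations, and then each of the $r_j$ gcd computations is between polynomials of degree at most $\deg f$, costing $\tilde{O}(\deg f\log q)$ each, for a per-iteration total of $\tilde{O}((r_j+\log q)\deg f\log q)\le\tilde{O}((r_{\max}+\log q)\deg f\log q)$. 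Steps II.1.3 and II.1.4 are cheap (selecting a factor, a field multiplication). Multiplying the per-iteration cost by the $O(\log q)$ iterations, and adding the Step~I contribution, gives
\[
\tilde{O}\!\left(\big(Z_{\max}+R_{\max}+(r_{\max}+\log q)\deg f\log q\big)\log q\right),
\]
as claimed; the $R_{\max}$ and $Z_{\max}$ terms absorb the $\log q$ factor from the number of iterations into the $\tilde{O}$ notation only in the sense that we keep them as a single summand multiplied by $\log q$, matching the statement.

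The main obstacle I anticipate is being careful about the precise cost of the repeated batch of gcd computations in II.1.2 so that the factor of $r_j$ (rather than $r_j\log q$ or worse) appears correctly: one must amortize the single modular exponentiation $x^{d/r_j}\bmod f$ across all $r_j$ values of $i$, exactly as in the proof of Lemma~\ref{lem-running time (g_k(x,y,z^n), x^r-beta)}, rather than recomputing it each time. A secondary point requiring attention is confirming that $\deg f$ never increases through the loop — each $g_i$ replacing $f$ is a divisor of $f$, so $\deg f$ is monotone non-increasing, hence every polynomial operation is on degree at most the original $\deg f$; this justifies using $\deg f$ uniformly. Everything else is a routine tally of the iteration counts $\sum e_j=O(\log q)$ and $m=O(\log q)$ against the per-step costs.
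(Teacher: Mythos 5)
Your proposal is correct and follows essentially the same route as the paper's proof: bound Step~I by $\tilde{O}(mZ_{\max})=\tilde{O}(Z_{\max}\log q)$, count $\sum_j e_j=O(\log q)$ iterations of the inner loop, and inside each iteration amortize a single modular exponentiation $x^{d/r_j}\bmod f(x)$ over the $r_j$ gcds. One small arithmetic slip: you write that the repeated-squaring step costs $\tilde{O}(\deg f\log q)$, but it costs $\tilde{O}(\deg f\log^2 q)$ (there are $O(\log q)$ squarings, each $\tilde{O}(\deg f\log q)$); your stated per-iteration total $\tilde{O}((r_j+\log q)\deg f\log q)$ is nevertheless the correct one, matching the paper's $\tilde{O}(\deg f\log^2 q)+\tilde{O}(r_j\deg f\log q)$.
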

\begin{proof}
Obviously, Step~I requires 
\[
O(mZ_{\max}).
\]
II.1.1 requires $O(R_{r_j})$.
In II.1.2,
first compute $h(x)\DEF{=}x^{d/r_j}\bmod f(x)$
using $\tilde{O}(\deg f\log^2 q)$
and then compute $\GCD{f(x)}{h(x)-\zeta_{r_j}^i b}$ for $0\leq i<r_j$
using $\tilde{O}(r_j\deg f\log q)$.
The time required for II.1.3 and II.1.4 are clearly dominated by II.1.2.
Since there are at most
\begin{eqnarray*}
\sum_{j=1}^m e_j &=& O(\log q)
\end{eqnarray*}
iterations,
Step~II requires 
\[
\tilde{O}((R_{\max}+(r_{\max}+\log q)\deg f\log q)\log q).
\]
The lemma follows.
\end{proof}

\begin{lemma}\label{lem-r th root => solving polynomial equation}
Let $\F_q$ be a finite field of $q$ elements.
For every prime factor $r$ of $q-1$,
suppose $r=O(\POLY(\log q))$
and
there are deterministic polynomial-time algorithms
for constructing $r$-th primitive root of unity
and computing $r$-th roots over $\F_q$.
Then,
there is a deterministic polynomial-time algorithm 
solving any polynomial equation over $\F_q$.
\end{lemma}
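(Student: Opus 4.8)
The plan is to assemble ingredients already in hand: the classical reductions recalled in \SECTION{sect-introduction}, Algorithm~\ref{alg-factoring products of linear polynomials}, and its analysis in Lemmas~\ref{lem-factoring products of linear polynomials} and~\ref{lem-running time: factoring products of linear polynomials}. Given $f(x)\in\F_q[x]$ of degree $d$, the goal is to reduce to the input shape demanded by Algorithm~\ref{alg-factoring products of linear polynomials} and then recurse.

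First I would normalize the input. Make $f$ monic and replace it by $h(x)\DEF{=}\GCD{f(x)}{x^q-x}$; as recalled in~(\ref{eqn-distinct degree factorization}), $x^q-x=\prod_{a\in\F_q}(x-a)$ is squarefree, so $h$ is exactly the product of the distinct monic linear polynomials $x-a$ with $f(a)=0$, and the solution set of $f(x)=0$ over $\F_q$ is precisely the set of roots of $h$. (Thus the squarefree reduction~(\ref{eqn-squarefree factorization}) is subsumed and need not be done separately.) The reduction $x^q\bmod f$ is computed by repeated squaring in $\tilde{O}(d\log^2 q)$ bit operations and the gcd in $\tilde{O}(d\log q)$. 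If $h(0)=0$ I record the root $0$ and divide $h$ by $x$; if at any stage $\deg h\le 1$ the solutions are read off directly. Otherwise $h$ satisfies the hypotheses of Algorithm~\ref{alg-factoring products of linear polynomials}. That algorithm also needs the prime factorization $q-1=r_1^{e_1}\cdots r_m^{e_m}$; since by hypothesis every prime factor $r$ of $q-1$ is $O(\POLY(\log q))$, trial division recovers this factorization in deterministic polynomial time.

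Next I would invoke Algorithm~\ref{alg-factoring products of linear polynomials} recursively: each call returns a non-trivial factor, splitting its input into two polynomials of strictly smaller degree, and I recurse on each until all factors are linear, at which point the roots are read off. Correctness is Lemma~\ref{lem-factoring products of linear polynomials} combined with the reduction above. For the timing, Lemma~\ref{lem-running time: factoring products of linear polynomials} bounds one call on a degree-$d'$ polynomial by $\tilde{O}((Z_{\max}+R_{\max}+(r_{\max}+\log q)d'\log q)\log q)$, where $Z_n$, $R_n$ are the costs of constructing a primitive $n$-th root of unity and of extracting an $n$-th root over $\F_q$. By hypothesis each prime factor $r_j$ of $q-1$ satisfies $r_j=O(\POLY(\log q))$ and has $Z_{r_j},R_{r_j}=O(\POLY(\log q))$, so a single call costs $\tilde{O}(\POLY(d\log q))$. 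Factoring a degree-$d$ polynomial into linear factors uses a binary recursion tree with at most $d-1$ internal nodes, i.e.\ at most $d-1$ such calls on polynomials of degree at most $d$, so the whole procedure runs in $\tilde{O}(\POLY(d\log q))$ bit operations, which is polynomial in the input size.

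There is no deep obstacle here; the work is bookkeeping. The three points needing care are: (i) verifying that $\GCD{f}{x^q-x}$ isolates exactly the $\F_q$-roots of $f$, each with multiplicity one, so that Algorithm~\ref{alg-factoring products of linear polynomials} applies; (ii) observing that the smallness hypothesis on the prime factors of $q-1$ makes the factorization of $q-1$ itself computable in polynomial time; and (iii) tracking the running time over the at most $d-1$ recursive splits. Finally, combined with Theorem~\ref{thm-r th root} and the fact that a primitive $r$-th root of unity is easily produced from an $r$-th nonresidue, this lemma will give Theorem~\ref{thm-solving polynomial equation} for $q\in{\cal Q}_1$.
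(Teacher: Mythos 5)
Your proposal is correct and follows essentially the same route as the paper: reduce to the case where $f$ is a product of distinct monic linear factors with nonzero constant term (the paper does this by citing the squarefree/distinct-degree reductions~(\ref{eqn-squarefree factorization}),~(\ref{eqn-distinct degree factorization}) recalled in \SECTION{sect-introduction} and simply declaring ``without loss of generality''), then invoke Algorithm~\ref{alg-factoring products of linear polynomials} repeatedly and bound the total cost via Lemma~\ref{lem-running time: factoring products of linear polynomials}. You fill in some details the paper leaves implicit — that $\GCD{f}{x^q-x}$ isolates exactly the $\F_q$-roots with multiplicity one, that the smallness hypothesis on the prime factors makes the factorization of $q-1$ computable by trial division, and that the recursion tree has at most $\deg f - 1$ internal nodes — but the substance is identical.
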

\begin{proof}
Without loss of generality,
assume the input polynomial $f(x)\in\F_q[x]$
is a product of two or more distinct monic linear polynomials
and $f(0)\neq0$.
The complete factorization of $f$ can be computed in polynomial-time
using Algorithm \ref{alg-factoring products of linear polynomials} repeatedly.
The overall running time is $\tilde{O}(\POLY(\deg f\log q))$
by Lemma \ref{lem-running time: factoring products of linear polynomials}.
Since the input size is $O(\deg f\log q)$,
it is a polynomial-time algorithm.
The lemma follows.

\end{proof}

\begin{proof}[Proof of Theorem \ref{thm-solving polynomial equation}]
Since $q\in{\cal Q}_1$,
the theorem is an obvious consequence of
Theorem~\ref{thm-r th root}
and Lemma~\ref{lem-r th root => solving polynomial equation}.
\end{proof}
\section{The Elliptic Curve ``$n$-th Root'' Problem}
\label{sect-elliptic curve n th root}
Let $\F_q$ be a finite field with $q$ elements.
For simplicity,
assume the characteristic of $\F_q$ is neither 2 nor 3.
Denote an elliptic curve $E$ over $\F_q$
by the \emph{Weierstrass equation}
\[E:y^2=x^3+a_4x+a_6
\]
for some $a_4,a_6\in\F_q$.
In the following,
we study the elliptic curve ``$n$-th root''
described in \SECTION{sect-introduction}.
Problems~(E1) and (E2) will be reduced
to the problem of solving polynomial equations.

It is well known that multiplication by $n$ over $E$ is an endomorphism,
\begin{eqnarray*}
n(x,y) &=& \left(\frac{U_1(x)}{V_1(x)},\;y\frac{U_2(x)}{V_2(x)}\right)
\end{eqnarray*}
for some polynomials $U_1(x),V_1(x),U_2(x),V_2(x)\in\F_q[x]$
such that
\begin{eqnarray*}
\deg U_1 &=& n^2, \\
\deg V_1 &\leq& n^2-1, \\
(U_1,V_1) &=& (U_2,V_2) \;=\; 1.
\end{eqnarray*}
All polynomials $U_1$, $V_1$, $U_2$ and $V_2$
can be computed in polynomial-time;
see \cite{lcw2008} for the details.

Suppose $Q\neq\infty$.
We have $Q=(a,b)$ for some $a,b\in\F_q$.
If $Q=n(x_0,y_0)$ for some $x_0,y_0\in\F_q$,
then $x_0$ is a solution of 
\begin{eqnarray*}
f(x) &\DEF{=}& U_1(x)-aV_1(x)=0
\end{eqnarray*}
over $\F_q$.
Suppose $\alpha_1,\ldots,\alpha_k\in\F_q$ are the roots of 
equation $f(x)=0$.
Let
\begin{eqnarray}\label{eqn-set of (alpha_i,beta)}
\label{eqn-g_i(y)}
g_i(y)
&\DEF{=}&
y^2-(\alpha_i^3+a_4\alpha_i+a_6); \\
\label{eqn-h_i(y)}
h_i(y)
&\DEF{=}&
y U_2(\alpha_i)-b V_2(\alpha_i); \\
{\bf P} &\DEF{=}& 
\SETR{(\alpha_i,\beta)\in\F_q^2}
{g_i(\beta)=0
\text{ and }h_i(\beta)=0}.
\end{eqnarray}
The set ${\bf P}$ is the complete set of solutions
of equation~(\ref{eqn-Q=nP}).
For (E1),
equation~(\ref{eqn-Q=nP}) has a solution
if and only if ${\bf P}$ is non-empty.
For (E2),
any point $P\in{\bf P}$ is a solution of equation~(\ref{eqn-Q=nP}).

Suppose $Q=\infty$.
Denote a fixed algebraic closure of $\F_q$ by $\overline{\F_q}$.
Let 
\begin{eqnarray*}
E[n](\F_q) &\DEF{=}& E[n]\cap E(\F_q),
\end{eqnarray*}
where $E[n]$ denotes the $n$-torsion subgroup of $E(\overline{\F_q})$.
Then
\[
P\in E[n](\F_q)
\]
if $P$ is a solution of equation~(\ref{eqn-Q=nP}).
Let $\alpha_1,\ldots,\alpha_k\in\F_q$ 
be the roots of the equation $V_1(x)=0$
and
\begin{eqnarray*}
{\bf P}' &\DEF{=}&
\SETR{(\alpha_i,\beta)_{1\leq i\leq k}}
{g_i(\beta)=0},
\end{eqnarray*}
where $g_i$ is defined in equation~(\ref{eqn-g_i(y)}).
Problems (E1) and (E2) can be solved similar to before.

\begin{proof}[Proof of Theorem \ref{thm-elliptic curve n th root}]
By the discussion above,
the sets ${\bf P}$ and ${\bf P}'$ can be computed
by solving a few polynomial equations over $\F_q$.
When $q\in{\cal Q}_1$,
a degree $d$ polynomial equation can be solved in $\tilde{O}(\POLY(d\log q))$
by Theorem \ref{thm-solving polynomial equation}.
Since $n=O(\POLY(\log q))$,
the degrees of all polynomials in the discussion above
are also $O(\POLY(\log q))$.
The theorem follows.
\end{proof}
Note that the running time of the elliptic curve $n$-th root algorithm
depends mostly on the finite field $\F_q$ but not the curve.
Once polynomial equations can be solved efficiently over $\F_q$,
elliptic curve $n$-th roots can be computed efficiently for any curve.
Also,
the number of points of $E(\F_q)$ is not required in the algorithm.
\IGNORE{
\paragraph{Acknowledgments:}
We thank Lawrence C. Washington for his guidance of the study.
He provided a lot of invaluable comments in this work.
}
\bibliographystyle{plain}
\bibstyle{plain}
\bibliography{rthroot}

\end{document}